\newtcolorbox{definition}[1]
{
  title={#1},
  coltitle = Green,  
  colframe=Green!15!white,
  colback=Green!10!white,
  boxrule=0mm,
  sharpish corners,
  breakable
}
\newtcolorbox{lemma}[1]
{
  title={#1},
  coltitle = Blue,  
  colframe=Blue!10!white,
  colback=Blue!5!white,
  boxrule=0mm,
  sharpish corners,
  breakable
}
\newtcolorbox{corollary}[1]
{
  title={#1},
  coltitle = Blue,  
  colframe=Blue!5!white,
  colback=Blue!2!white,
  boxrule=0mm,
  sharpish corners,
  breakable
}
\newtcolorbox{theorem}[1]
{
  title={#1},
  coltitle = Purple,  
  colframe=Purple!15!white,
  colback=Purple!10!white,
  boxrule=0mm,
  sharpish corners,
  breakable
}
\newtcolorbox{proofbox}
{
  coltitle = Black,  
  colframe=Black!5!white,
  colback=Black!2!white,
  boxrule=0mm,
  sharpish corners,
  breakable
}
\begin{document}

\author{
  Alice Petrov
}
\title{The Essence of de Rham Cohomology}

\setcounter{page}{0}
\setcounter{secnumdepth}{1}

\newpage

\maketitle


\begin{abstract}
The study of differential forms that are closed but not exact reveals important information about the global topology of a manifold, encoded in what are called the de Rham cohomology groups $H^k(M)$, named after Georges de Rham (1903–1990).
This expository paper is an explanation and exploration of de Rham cohomology and its equivalence to singular cohomology. We present an intuitive introduction to de Rham cohomology and discuss four associated computational tools: the Mayer-Vietoris theorem, homotopy invariance, Poincaré duality, and the Künneth formula. We conclude with a statement and proof of de Rham's theorem, which asserts that de Rham cohomology is equivalent to singular cohomology.
\end{abstract}

\tableofcontents



\section{Introduction}

In mathematics, cohomology offers a unifying perspective. Given a mathematical object \(\mathcal{X}\) of dimension $d$, it's often possible to naturally construct a graded real vector space 
\[
\mathcal{A}(\mathcal{X}) = \bigoplus_{k=0}^d \mathcal{A}^k(\mathcal{X})
\]
known as the cohomology of \(\mathcal{X}\). 
When two objects of the same type, \(\mathcal{X}\) and \(\mathcal{Y}\), are related, these relationships are typically reflected in their respective cohomologies \cite{huh2022combinatorics}. Historically, topologists and geometers have been the primary beneficiaries of this viewpoint. Today, it will be us.

The conventional principles of geometry and calculus fail over ``unconventional'' topological spaces, and the extent to which they fail tells us something about the properties of the space.
In geometry, this is captured by homology, and in calculus, by cohomology.
Here's the kicker: despite their different names, homology and cohomology are twins. They both measure precisely the same holes in a space: this is formulated in de Rham's theorem, which we state and prove in Section \ref*{section:derhamtheorem}. 

The two constructions with which we will primarily be concerned are singular cohomology and de Rham cohomology. The singular cohomology groups are obtained from a simple dualization in the definition of singular homology. The de Rham cohomology groups are the quotients of ``closed forms'' and ``exact forms.'' Closed forms are differential forms whose exterior derivative is zero. Exact forms are differential forms obtained by applying the exterior derivative.

This exposition assumes that the reader is familiar with singular (co)homology, differential forms, and smooth manifolds. Suitable introductions to these topics can be found in \cite{hatcher2005algebraic}, \cite{guillemin2019differential}, and \cite{lee2012smooth}, respectively. 
The remainder of this paper is organized as follows. We begin with an intuitive introduction to de Rham cohomology, and the properties we expect to capture, followed by a more structured introduction in Section 2. We then, in great generality, cover some basic results and tools for computation in Section 3. We prove one of these tools, the Mayer Vietoris theorem, in Section 4. To fully convince the reader that these computations do indeed capture the properties we expect them to (the same properties as homology), Section 5 is dedicated to a statement and proof of de Rham's theorem.

\section{De Rham Cohomology}
\label{section:derham}

De Rham cohomology establishes a connection between the differential forms defined on a manifold and its global topological properties. 
In the following section, we provide some intuition for how this works and hopefully convince you, the reader, that it is worth caring about. The main reference used is \cite{bachman2012geometric}, Section 7.7.

\subsection{Preamble}

Consider the punctured plane $M = \mathbb{R}^2 - \{(0, 0)\}$. Every point $p \in M$ is contained in an open set identical to an open set in $\mathbb{R}^2$. This implies that all of the local properties of $\mathbb{R}^2 - \{(0, 0)\}$ are equivalent to those of $\mathbb{R}^2$. The presence of a hole in $M$ then represents a global topological property, it says something about the entire manifold.
\begin{center}
    \includegraphics[width=0.5\textwidth]{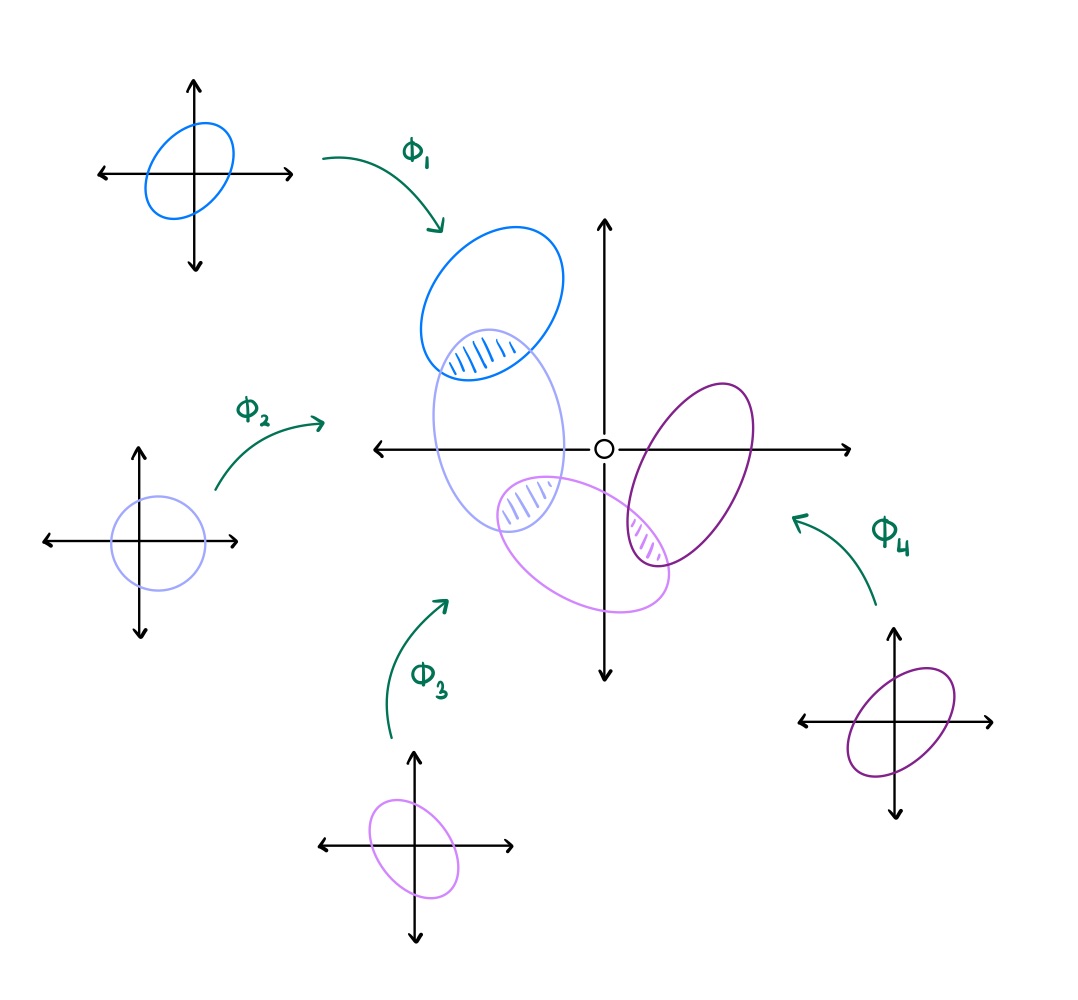}
\end{center}
To identify these global properties we turn to differential forms, and in the process, we encounter forms that are ``interesting'' and forms that are not. A differential form \(\omega\) on a manifold \(M\) is said to be closed if \(d\omega = 0\) and exact if \(\omega = d\tau\) for some differential form \(\tau\) of degree one less than $\omega$.
The interesting forms are those whose exterior derivative equals zero, namely ``closed'' forms.

First, let's demonstrate that closed forms do not provide interesting \textit{local} information by considering the closed 1-form $\omega_0$ defined on $\mathbb{R}^2 - \{(0, 0)\}$ by:
\[
\omega_0 = -\frac{y}{x^2 + y^2}dx + \frac{x}{x^2 + y^2}dy
\]
Looking locally at \(M\), we can integrate \(\omega_0\) over some small closed curve \(C\) that lies entirely inside an open subset \(U_i\); that is, over a ``local'' curve.
\begin{center}
    \includegraphics[width=0.3\textwidth]{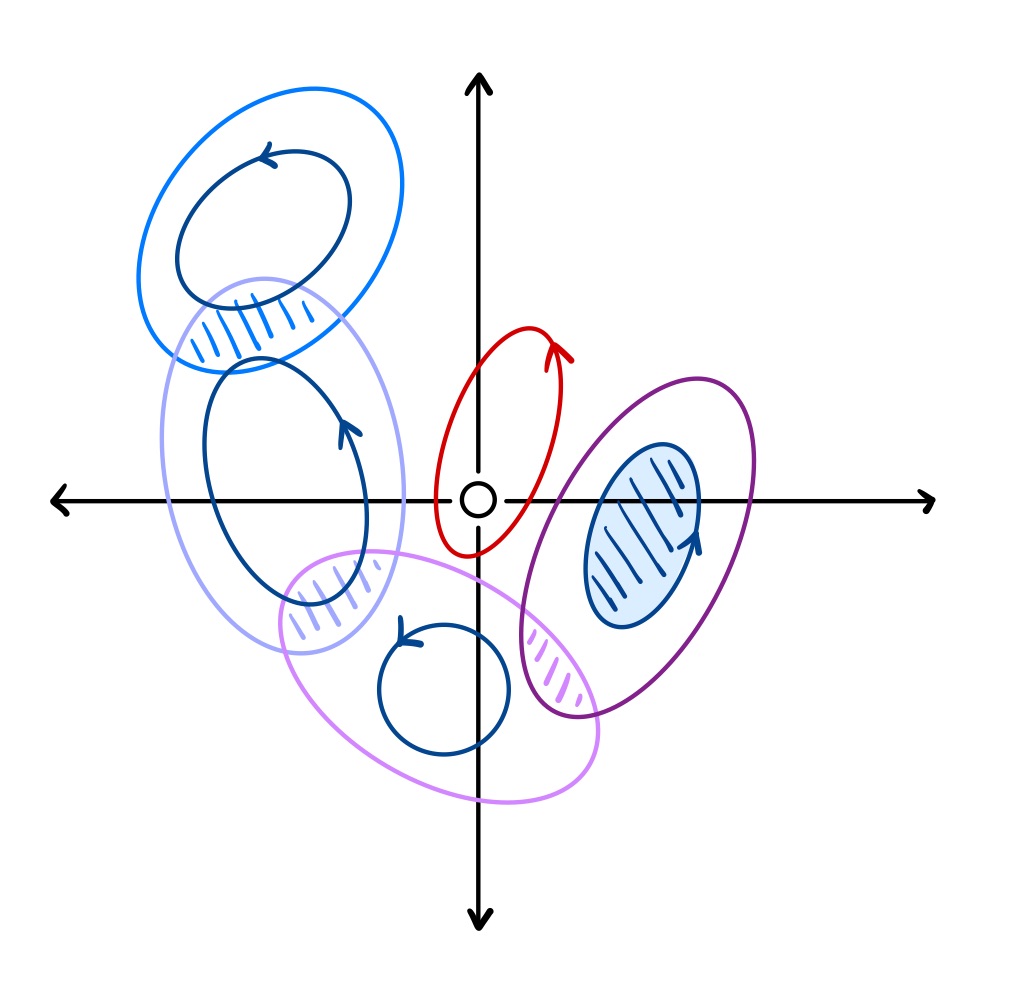}
\end{center}
Since the local curve \(C\) is the boundary of some disk \(D\), we have \(\partial C = 0\) (i.e., \(C\) is a closed 1-chain).
Then, Stokes' Theorem says
\[
\int_C \omega_0 = \int_D d\omega_0 = \int_D 0 = 0
\]
In a suitably small region within any manifold, every closed 1-chain forms the boundary of a disk. Consequently, the integration of closed 1-forms over these ``local'' curves does not convey any local information.

What happens when we integrate a closed 1-form \(\omega_0\) along a curve that goes around the missing point at the origin?
Firstly, this curve is no longer contained in a single open subset.
If we have both a closed 1-form \(\omega_0\) and a closed 1-chain \(C\) such that
\[
\int_C \omega_0 \neq 0
\]
then we can deduce that \(C\) doesn't bound a disk. Indeed, there must be something strange going on inside that curve \(C\). The existence of such a 1-chain provides valuable insight into the \textit{global} topological properties of \(M\). If \(C\) isn't the boundary of some disk, then it encloses some hole.

We will be using differential forms, and in particular, the relation between closed and exact forms, to determine global topological information about manifolds. Now, why should we particularly care about exact forms? On their own, exact forms do not provide especially interesting information.

For example, suppose that we have a 1-form \(\omega_1\) that is the derivative of some 0-form \(f\) (i.e., \(\omega_1 = df\)). That is, \(\omega_1\) is exact. 
Let \(C\) be a closed 1-chain. Whether or not \(C\) encloses a hole, we can divide $C$ into two segments by selecting two points, \(p\) and \(q\), on \(C\). Then \(C = C_1 + C_2\), where \(C_1\) goes from \(p\) to \(q\) and \(C_2\) goes from \(q\) back to \(p\). 
\begin{center}
    \includegraphics[width=0.3\textwidth]{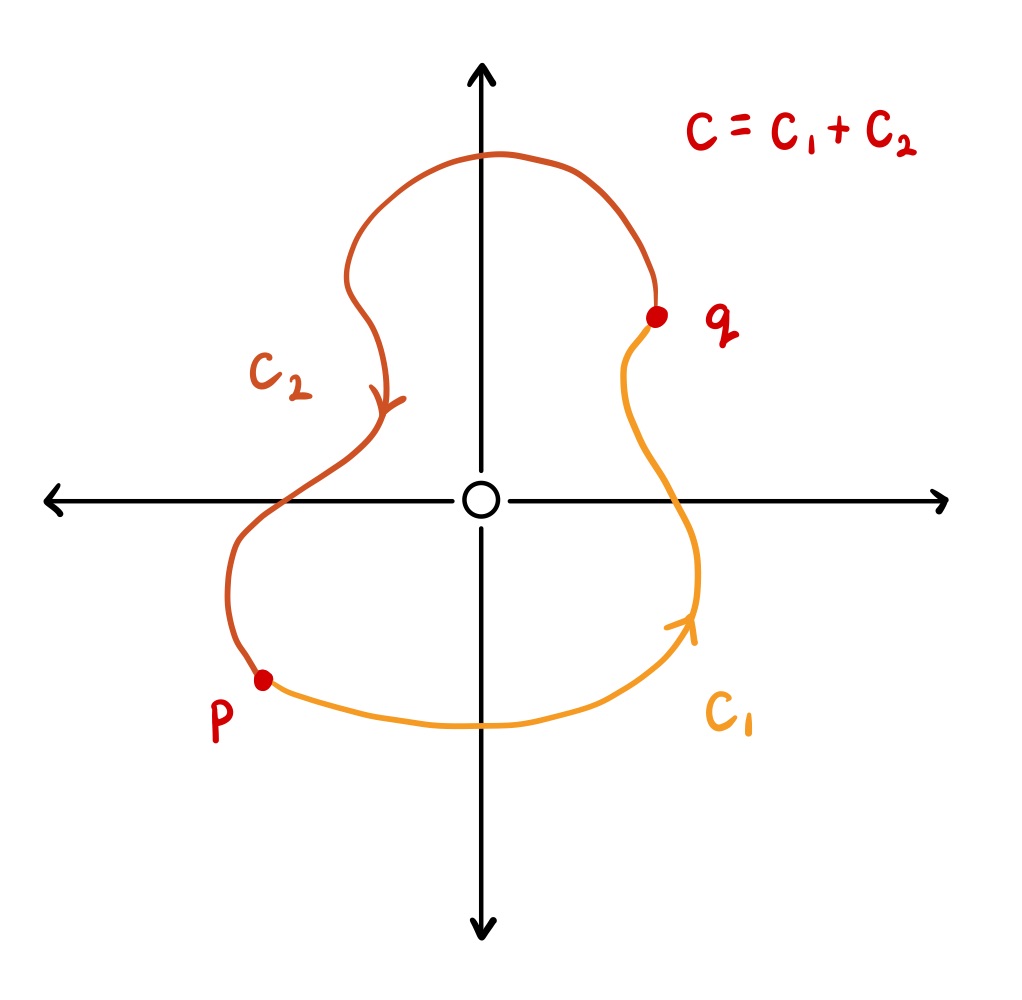}
\end{center}
Then,
\begin{align*}
    \int_C \omega_1 &= \int_{C_1 + C_2} \omega_1 \\
    &= \int_{C_1} \omega_1 + \int_{C_2} \omega_1 \\
    &= \int_{C_1} df + \int_{C_2} df \\
    &= \int_{p-q} f + \int_{q-p} f \\
    &= 0
\end{align*}
So integrating an exact form over a closed 1-chain always results in zero.

Since \(d^2 = 0\) (\cite{guillemin2019differential}, Properties 2.4.2), every exact form is closed. 
If our manifold were \( \mathbb{R}^n \), then, by the Poincaré lemma (\cite{guillemin2019differential}, Lemma 2.4.16), every closed form would also be exact. This implies that the set of closed forms would be identical to the set of exact forms. Therefore, there would be no curve \( C \) such that for the closed form \( \omega \), we would have \( \int_C \omega = 0 \). This shouldn't be surprising, as we know that there are no holes in \( \mathbb{R}^n \).

What we are truly interested in is the scenario where the set of closed forms differs from the set of exact forms. In general, not every closed form is exact. We seek some way to measure the difference between these two sets.

To address this, we introduce an equivalence relation.
We restrict ourselves to closed forms, but we consider two of them to be ``the same'' if their difference is an exact form. The set which we end up with is the de Rham cohomology of the manifold. 
\begin{center}
    \includegraphics[width=0.6\textwidth]{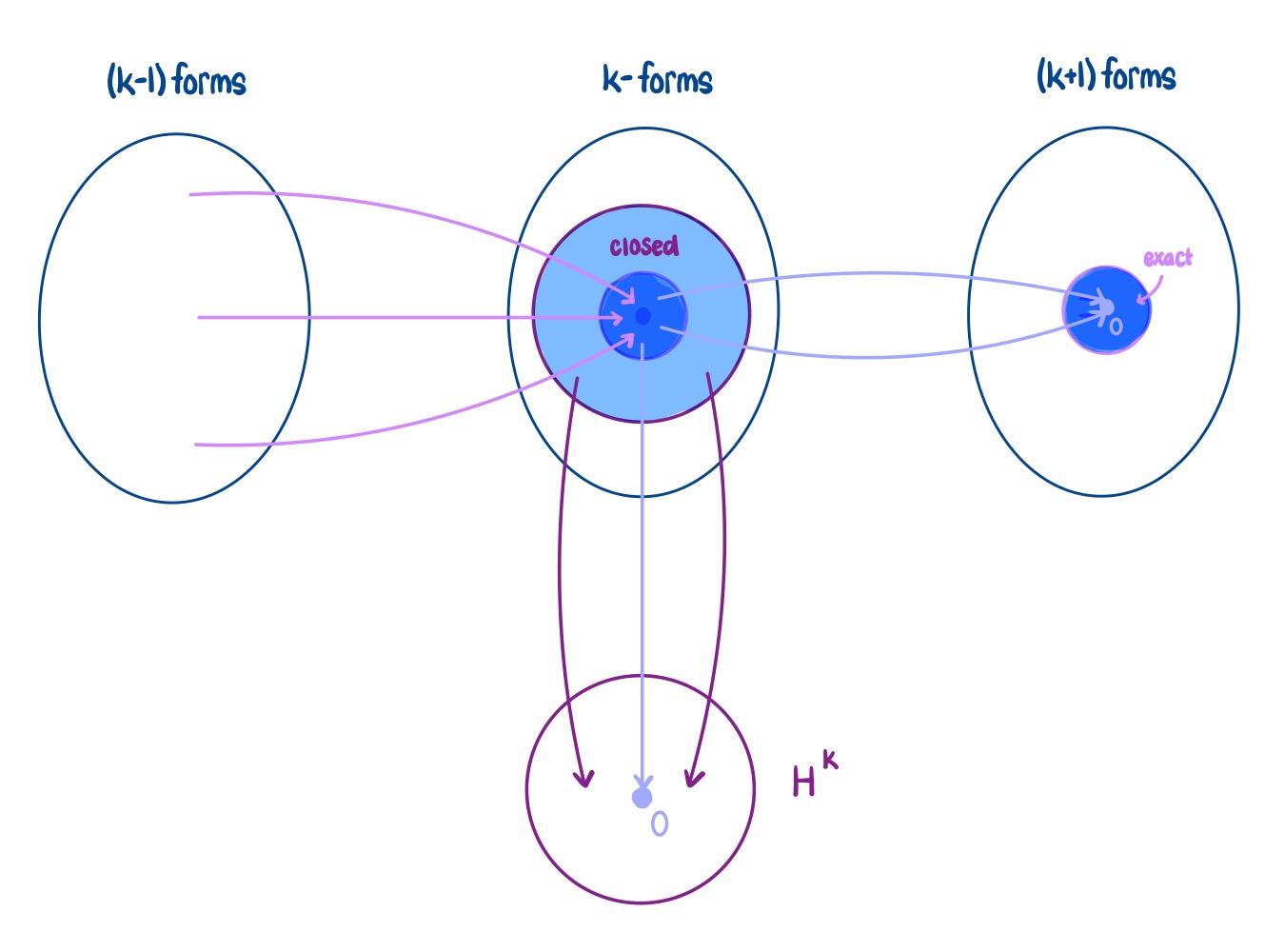}
\end{center}
It's worth noting that the difference between an exact form and a form that always evaluates to zero is another exact form. Hence, every exact form corresponds to zero in de Rham cohomology.
For each \(k\), the set \(H^k\) encodes a significant amount of information about the manifold. For example, if \(H^1 \cong \mathbb{R}^1\) (as is the case for \(R^2 - \{(0, 0)\}\)), then the manifold possesses one ``hole.''

It is these kinds of questions that led Henri Poincaré to look for conditions under which a differential form is exact on \(\mathbb{R}^n\). Poincaré's early investigations led to the development of the Poincaré lemma in 1887, which established that a \(k\)-form on \(\mathbb{R}^n\) is exact if and only if it is closed. In 1889, Vito Volterra provided a complete proof of this lemma for all values of \(k\).

Poincaré's pioneering work extended to algebraic topology in 1895, where he introduced the concept of homology, defining cycles as compact submanifolds without boundaries. In 1931, Georges de Rham's doctoral thesis established a duality between de Rham cohomology and singular homology with real coefficients. Although he didn't explicitly define de Rham cohomology at the time, his work laid the foundation for its formal definition in 1938.

It turns out that whether every closed form is exact depends very much on the topology of the manifold, as we have seen in the case of \(\mathbb{R}^2\) and \(\mathbb{R}^2 - \{(0,0)\}\). De Rham cohomology, possibly the most important diffeomorphism invariant of a manifold, precisely measures the degree to which closed forms deviate from exactness.

\subsection{De Rham Cohomology}

The following definitions are due to \cite{bott1982differential}, Chapter 1.
Let $x_{1}, \ldots, x_{n}$ be the standard coordinates on $\mathbb{R}^{n}$ and define an algebra $\Omega^{*}$ over $\mathbb{R}$ generated by $d x_{1}, \ldots, d x_{n}$ subject to the relations
\[
\begin{cases}
    \; d x_{i} d x_{j}=-d x_{j} d x_{i} \qquad  &\text{for all } i \neq j \\
    \; (d x_{i})^2 = 0 \qquad &\text{for all } i
\end{cases}
\]
The smooth differential forms on $\mathbb{R}^{n}$ are elements of the algebra
$$
\Omega^{*}\left(\mathbb{R}^{n}\right):=\left\{\text {smooth functions on } \mathbb{R}^{n}\right\} \otimes_{\mathbb{R}} \Omega^{*}
$$
The wedge product of two differential forms $\tau=\sum f_{I} d x_{I}$ and $\omega=\sum g_{J} d x_{J}$ is defined as
$$
\tau \wedge \omega=\sum f_{I} g_{J} d x_{I} d x_{J}
$$
where $I$ and $J$ are a multi-indices.
The differential operator $$d: \Omega^{k}\left(\mathbb{R}^{n}\right) \rightarrow \Omega^{k+1}\left(\mathbb{R}^{n}\right)$$ 
is called exterior differentiation, and is defined by 
\[
\begin{cases}
    \; d f=\sum\left(\partial f / \partial x_{i}\right) d x_{i} \qquad &\text{if } f \in \Omega^{0}\left(\mathbb{R}^{n}\right) \\ 
    \; d \omega=\sum d f_{I} d x_{I} \qquad &\text{if } \omega=\sum f_{I} d x_{I}
\end{cases}
\]
It can be shown that $d$ is an antiderivation (\cite{bott1982differential}, Proposition 1.3) and $d^2 = 0$ (\cite{bott1982differential}, Proposition 1.4). Furthermore, it is independent of the coordinate system on $\mathbb{R}^n$.

Let $x_{1}, \ldots, x_{m}$ and $y_{1}, \ldots, y_{n}$ be the standard coordinates on $\mathbb{R}^{m}$ and $\mathbb{R}^{n}$  respectively. A smooth map $f: \mathbb{R}^{m} \rightarrow \mathbb{R}^{n}$ induces a pullback map on forms $f^{*}: \Omega^{*}\left(\mathbb{R}^{n}\right) \rightarrow \Omega^{*}\left(\mathbb{R}^{m}\right)$ in such a way that it commutes with $d$:
$$
f^{*}\left(\sum g_{I} d y_{i_{1}} \ldots d y_{i_{q}}\right)=\sum\left(g_{I} \circ f\right) d f_{i_{1}} \ldots d f_{i_{q}} \qquad \text{where } f_{i}=y_{i} \circ f
$$
Just as in the case of $\mathbb{R}^{n}$, when we have a smooth map of differentiable manifolds $f: M \rightarrow N$, it naturally induces a pullback map on forms $f^{*}: \Omega^{*}(N) \rightarrow \Omega^{*}(M)$.

We can extend the concepts of exterior derivatives and wedge products to differential forms on a manifold.
A differential form $\omega$ on a smooth manifold $M$ consists of a collection of forms $\omega_U$ for each $U$ in the atlas that defines $M$, such that if $i_u$ and $i_v$ denote the inclusions
\[\begin{tikzcd}
	& {U \cap V} \\
	U && V
	\arrow["i_v"', from=1-2, to=2-3]
	\arrow["i_u", from=1-2, to=2-1]
\end{tikzcd}\]
then the pullbacks $i_u^{*} \omega_{U}=i_v^{*} \omega_{V}$ in $\Omega^{*}(U \cap V)$. 

\vskip 0.1in

\noindent Now that we've covered some key definitions, let us begin exploring de Rham cohomology.
Let 
\begin{enumerate}
    \item[] \(M\) be a smooth manifold of dimension $n$,
    \item[] $\Omega^k(M)$ be the space of differential forms on $M$ of degree $k$, and 
    \item[] $d \colon \Omega^k(M) \to \Omega^{k+1}(M)$ denote exterior differentiation.
\end{enumerate}
Then
$$
0 \rightarrow \Omega^{0}(M) \xrightarrow[]{d} \Omega^{1}(M) \xrightarrow[]{d} \Omega^{2}(M) \xrightarrow[]{d} \cdots
$$
is a chain complex, called the \textbf{de Rham complex} of $M$.
A $k$-form $\omega \in \Omega^k(M)$ is \textbf{closed} if $\mathrm{d}\omega = 0$ and is \textbf{exact} if $\omega = \mathrm{d}\tau$ for some $\tau \in \Omega^{k-1}(M)$ (\cite{guillemin2019differential}, Definition 2.4.15).
We define
\begin{center}
    \(Z^k(M)\):= $\{ \omega \in \Omega^k(M) \mid d\omega = 0 \}$ (the vector space of all \textit{closed} \(k\)-forms)
    \vskip 0.2cm
    \(B^k(M)\):= $d(\Omega^{k-1}(M)) \subset \Omega^k(M)$ (the vector space of all \textit{exact} \(k\)-forms)
\end{center}
For all $\omega \in \Omega^k(M)$, we have $\mathrm{d} \circ \mathrm{d} (\omega) = 0$.
Hence every exact form is closed, and so \(B^k(M)\) is a subspace of \(Z^k(M)\).
The \textbf{k\textsuperscript{th} de Rham cohomology group of the manifold $M$} is the quotient vector space 
$$H^k(M) := Z^k(M)/B^k(M)$$
The construction of the quotient vector space induces an equivalence relation, where two closed forms \(\omega\) and \(\omega'\) are considered equivalent in \(Z^k(M)\) if and only if their difference is an exact form:
\begin{center}
    \(\omega' \sim \omega\) in \(Z^k(M)\) if and only if \(\omega' - \omega \in B^k(M)\)
\end{center}
The equivalence class of a closed form \([\omega]\) is called its \textbf{cohomology class}. 
The vector spaces \(H^k(M)\) also have \textit{compactly supported} counterparts, where
\begin{center}
    $Z^k_c(M) := \{ \omega \in \Omega^k_c(M) \mid d\omega = 0 \}$
    \vskip 0.2cm
    $B^k_c(M) := d(\Omega^{k-1}_c(M)) \subset \Omega^k_c(M)$
\end{center}
Then, similar to the previous case, \(B^k_c(M)\) is a  subspace of \(Z^k_c(M)\) and
\[
H^k_c(M) := Z^k_c(M)/B^k_c(M)
\]
is the \(k\)-th \textbf{compactly supported} de Rham cohomology group of \(X\).
This definition will matter when we later define Poincaré duality.

Before we move on to some computations, let us briefly discuss the ring structure on de Rham cohomology (\cite{tu2011manifolds}, 24.4). 
The vector space $\Omega^*(M)$ of smooth differential forms on $M$ is the direct sum 
$$\Omega^*(M) = \bigoplus_{k=0}^n \Omega^k(M)$$
Taking multiplication as the wedge product, this becomes a graded algebra, where the grading is the degree of the differential forms.
This product structure induces a product structure in cohomology
$$H^*(M) = \bigoplus_{k=0}^n H^k(M)$$
An element $\alpha \in H^*(M)$ is a unique, finite sum of cohomology classes $\alpha_i \in H^k(M)$ 
$$\alpha = \sum_{i=0}^n \alpha_i$$
One can check that $H^*(M)$ satisfies all the properties of a ring, where multiplication is the wedge product.
This is called the cohomology ring of $M$, and it has a natural grading by the degree of a closed form.

\subsection{Some Basic Properties}

De Rham cohomology is a powerful diffeomorphism invariant of smooth manifolds (\cite{tu2011manifolds}, Section 24.4).
However, direct computation of the de Rham groups is generally not easy. 
We begin with two results that can help narrow down our computations. The first characterizes the 0\textsuperscript{th} cohomology group, and the second gives us a bound on the dimensions we need to consider.

\begin{lemma}{Proposition \ref*{section:derham}.1: Counting Connected Components (\cite{tu2011manifolds}, 24.1)}
    If a smooth manifold \(M\) has \(r\) connected components, then \(H^0(M) = \mathbb{R}^r\).
\end{lemma}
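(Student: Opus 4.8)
The plan is to unwind the definition of $H^0(M)$ and identify it with the space of locally constant real-valued functions on $M$. First, observe that there are no differential forms of negative degree, i.e.\ $\Omega^{-1}(M) = 0$, so $B^0(M) = d(\Omega^{-1}(M)) = 0$. Consequently $H^0(M) = Z^0(M)/B^0(M) = Z^0(M) = \{\, f \in \Omega^0(M) \mid df = 0 \,\}$, and it suffices to show that the space of smooth functions with vanishing exterior derivative is isomorphic to $\mathbb{R}^r$.

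Next I would show that $df = 0$ if and only if $f$ is locally constant. In any coordinate chart $(U, x_1, \dots, x_n)$ we have $df = \sum_i (\partial f/\partial x_i)\, dx_i$, so $df = 0$ forces every partial derivative of $f$ to vanish on $U$. Taking $U$ to be a connected coordinate ball, integration along straight-line segments (equivalently, the mean value theorem) shows $f$ is constant on $U$; since every point of $M$ has such a neighborhood, $f$ is locally constant. The converse is immediate, as a locally constant function has all partials zero in every chart.

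Then I would invoke the topology of $M$: a smooth manifold is locally connected, so its connected components $M_1, \dots, M_r$ are open (and closed). A locally constant function is constant on each component, since the preimage of any value is open and closed and hence a union of components. Writing $f \equiv c_j$ on $M_j$, the evaluation map $f \mapsto (c_1, \dots, c_r)$ is a linear map $Z^0(M) \to \mathbb{R}^r$; it is injective because a function vanishing on every component is identically zero, and surjective because any prescribed tuple of constants defines a smooth (locally constant) function on $M = M_1 \sqcup \cdots \sqcup M_r$. Therefore $H^0(M) \cong \mathbb{R}^r$.

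The only substantive point — and thus the main obstacle — is the implication ``$df = 0 \Rightarrow f$ locally constant,'' which rests on the connectedness argument inside a chart; the rest is bookkeeping about the quotient and about connected components. One should take mild care to use ``locally constant'' with respect to the manifold topology and to cite local connectedness of manifolds, so that the components are open and the decomposition $M = \bigsqcup_j M_j$ behaves as expected.
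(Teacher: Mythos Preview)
Your proposal is correct and follows essentially the same route as the paper: observe that $B^0(M)=0$ so $H^0(M)=Z^0(M)$, compute $df$ in a chart to see that closed $0$-forms are exactly the locally constant functions, and then identify these with $\mathbb{R}^r$ via their values on the $r$ connected components. If anything, you are more careful than the paper, explicitly invoking local connectedness and spelling out the evaluation isomorphism $f\mapsto(c_1,\dots,c_r)$.
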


\begin{proof}
    Suppose \(f\) is a closed 0-form on \(M\), meaning \(f\) is a smooth function on \(M\) and \(df = 0\).
    In any chart \((U, x^1, \ldots, x^n)\), we can represent \(df\) as:
    \[df = \sum_{i} \frac{\partial f}{\partial x^i} dx^i\]    
    Thus, \(df = 0\) on \(U\) if and only if all the partial derivatives \(\frac{\partial f}{\partial x^i}\) vanish entirely on \(U\), which is equivalent to \(f\) being locally constant on \(U\).
    Consequently, closed 0-forms on \(M\) are locally constant functions, meaning they remain constant within each connected component of \(M\).
    Given that \(M\) has \(r\) connected components, $df = 0$ is satisﬁed if and only if $f$ is constant on each connected component, hence it is speciﬁed by $r$ real numbers.
    Since there are no nonzero exact 0-forms, we have 
    \[H^0(M) = Z^0(M) = \{\text{closed 0-forms}\}\]
    Therefore, \(H^0(M) = \mathbb{R}^r\).
\end{proof}
\begin{corollary}{Corollary \ref*{section:derham}.1.1}
    If \(M\) is connected, then \(H^0(M) = \mathbb{R}\).
\end{corollary}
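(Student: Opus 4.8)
The plan is to obtain this as an immediate specialization of the preceding proposition. A connected manifold is precisely one whose set of connected components has a single element, i.e.\ the integer $r$ appearing in Proposition \ref*{section:derham}.1 equals $1$. Substituting $r = 1$ into the identity $H^0(M) = \mathbb{R}^r$ established there yields $H^0(M) = \mathbb{R}^1 = \mathbb{R}$, which is the claim. No genuinely new argument is required; the corollary is recorded only because the connected case is the one that arises most often in practice.

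If one prefers a self-contained derivation rather than a citation, I would unwind the same three ingredients directly. First, recall that a closed $0$-form is a smooth function $f$ with $df = 0$, and that in any chart this forces every partial derivative of $f$ to vanish, so $f$ is locally constant. Second, invoke connectedness: a locally constant function on a connected space is constant, so $Z^0(M)$ is exactly the one-dimensional space of constant functions on $M$. Third, observe that $B^0(M) = d(\Omega^{-1}(M)) = 0$, since there are no forms of negative degree; hence $H^0(M) = Z^0(M)/B^0(M) = Z^0(M) \cong \mathbb{R}$.

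There is no real obstacle here. The only points worth flagging are the bookkeeping convention $\mathbb{R}^1 = \mathbb{R}$ and the (standard) convention that ``connected'' for a manifold means connected as a topological space — equivalently, for a manifold, path-connected, equivalently having exactly one connected component — so that the hypothesis of Proposition \ref*{section:derham}.1 applies with $r = 1$.
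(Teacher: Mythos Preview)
Your proposal is correct and matches the paper's treatment: the corollary is stated immediately after Proposition \ref*{section:derham}.1 with no separate proof, precisely because it is the specialization $r=1$ you describe. Your optional self-contained derivation simply unwinds that proposition's argument in the connected case and is likewise fine.
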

So far, we have that $H^0(M)$ is the vector space of real-valued locally constant functions on $M$, and $\dim H^0(M)$ tells us the number of connected components of $M$. What about higher dimensions?
\begin{lemma}{Proposition \ref*{section:derham}.2 \cite{tu2011manifolds}}
    If \(M\) is a smooth manifold of dimension \(n\), then \(H^k(M) = 0\) for \(k > n\).
\end{lemma}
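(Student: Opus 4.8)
The plan is to show something stronger than cohomology vanishing, namely that the cochain groups themselves vanish above the dimension: I claim $\Omega^k(M) = 0$ whenever $k > n$. Granting this, the statement is immediate, since then $Z^k(M) \subseteq \Omega^k(M) = 0$ and $B^k(M) = d(\Omega^{k-1}(M)) \subseteq \Omega^k(M) = 0$, so $H^k(M) = Z^k(M)/B^k(M) = 0$. So the entire content of the proof is the vanishing of $\Omega^k(M)$.

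To establish that, I would first argue locally. Fix a chart $(U, x^1, \ldots, x^n)$ in an atlas for $M$. By definition a $k$-form on $U$ lies in $\{\text{smooth functions on } U\} \otimes_{\mathbb{R}} \Omega^k$, so it is a finite sum of terms $f\, dx^{i_1} dx^{i_2} \cdots dx^{i_k}$. Using the defining relations $dx^i\, dx^j = -dx^j\, dx^i$ and $(dx^i)^2 = 0$, any such monomial in which two of the indices coincide is zero, while any monomial with pairwise distinct indices can be reordered, up to a sign, so that $i_1 < i_2 < \cdots < i_k$. When $k > n$, the pigeonhole principle forces a repetition among any list of $k$ indices drawn from $\{1, \ldots, n\}$, so every such monomial vanishes and hence $\Omega^k(U) = 0$.

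Finally I would globalize. A differential form $\omega$ on $M$ is, by the definition recalled in Section~\ref*{section:derham}, a compatible collection $\{\omega_U\}$ of forms indexed by the charts $U$ of the atlas, satisfying $i_u^*\omega_U = i_v^*\omega_V$ on overlaps. Since every chart domain is $n$-dimensional, the local computation gives $\omega_U = 0$ for every $U$, and therefore $\omega = 0$. Thus $\Omega^k(M) = 0$ for $k > n$, which finishes the proof.

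As for the main obstacle: there is essentially none of substance. The result is a direct consequence of the linear algebra of the exterior algebra together with the bookkeeping definition of a form on a manifold; in particular it needs no analysis, no partition of unity, and none of the computational machinery (Mayer–Vietoris, homotopy invariance) developed later. The only step deserving a moment's care is the passage from charts to the whole manifold, and that is dispatched immediately by the compatibility condition defining $\Omega^*(M)$.
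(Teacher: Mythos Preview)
Your proof is correct and follows essentially the same strategy as the paper: both show the stronger statement $\Omega^k(M)=0$ for $k>n$ by reducing to the linear-algebra fact that there are no nonzero alternating $k$-tensors on an $n$-dimensional space. The only cosmetic difference is that the paper argues pointwise via $A^k(T_pM)=0$, whereas you argue chartwise via the relations $dx^i\,dx^j=-dx^j\,dx^i$ and then globalize with the compatibility condition.
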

\begin{proof}
    It suffices to show that for \(k > n\), the only \(k\)-form on \(M\) is the zero form.
    For any point \(p \in M\), the tangent space \(T_p M\) is a vector space of dimension \(n\). If \(\omega\) is a \(k\)-form on \(M\), then \(\omega_p \in A^k (T_p M)\), the space of alternating \(k\)-linear functions on \(T_p M\). 
    If $k > dim(V)$, then $A_k(V) = 0$ (\cite{tu2011manifolds}, 
Corollary 3.31). 
    Hence, for \(k > n\), we have \(A^k (T_p M) = 0\).
\end{proof}

\subsubsection{Example}
Let us compute the de Rham cohomology of the real line $\mathbb{R}^1$ (\cite{tu2011manifolds}, Example 24.3).
Since the real line is connected, by Corollary \ref*{section:derham}.1.1
$$H^0(\mathbb{R}^1) = \mathbb{R}$$
There are no nonzero 2-forms on $\mathbb{R}^1$, which implies every 1-form on $\mathbb{R}^1$ is closed. 
A 1-form $f(x) \, dx$ on $\mathbb{R}^1$ is exact if and only if there is a smooth function $g(x)$ on $\mathbb{R}^1$ such that $f(x) \, dx = dg = g'(x) \, dx$. Such a function $g(x)$ is simply an antiderivative of $f(x)$, therefore every 1-form on $\mathbb{R}^1$ is exact. 
Thus, $$H^1(\mathbb{R}^1) = 0$$
Applying Proposition \ref*{section:derham}.2, we have $$H^k(\mathbb{R}^1) = \begin{cases} \mathbb{R} & \text{for } k = 0, \\ 0 & \text{for } k \geq 1. \end{cases}$$

\section{Major Tools of de Rham Cohomology}
\label{section:tools}

To give the reader a general idea of how computations are done, we present four fundamental tools for computing de Rham cohomology and demonstrate a few examples. We defer the proofs of these tools until after we've seen some interesting applications.

\subsection{Mayer-Vietoris}
Our primary tool is the Mayer-Vietoris theorem, which provides a method for computing the cohomology of the union of two open sets. These techniques fall under the notion of ``diagram chasing'' and mastering them is more akin to becoming proficient in checkers or crossword puzzles than the mathematics to which they are applied. However, despite their seemingly mechanical nature, these techniques offer a powerful method for systematically breaking down the cohomology of complex spaces into manageable pieces.

The Mayer–Vietoris theorem addresses the following question: If we have information about the cohomology groups of two out of three complexes, what can we infer about the cohomology groups of the third?
Specifically, it allows us to calculate $H^k(U \cup V)$ as a ``function'' of $H^k(U)$, $H^k(V)$ and $H^k(U \cap V)$. By iteration, we get a computation for $H^k(U_1 \, \cup \, U_2 \, \cup \dots \, \cup \, U_n)$. Combined with the Poincaré lemma, this yields a principal calculation suitable for many general cases.

\begin{theorem}{Theorem \ref*{section:tools}.1: Mayer-Vietoris theorem (\cite{tu2011manifolds}, 26.1)}
Let $M = U \cup V$ be a smooth manifold where $U$ and $V$ are open.
Denote the inclusion maps
\[\begin{tikzcd}
	{U \cap V} & U \\
	V & M
	\arrow["{j_u}", hook, from=1-1, to=1-2]
	\arrow["{j_v}"', hook, from=1-1, to=2-1]
	\arrow["{i_u}", hook, from=1-2, to=2-2]
	\arrow["{i_v}"', hook, from=2-1, to=2-2]
\end{tikzcd}\]
which induce pullback maps on differential forms,
\[\begin{tikzcd}
	\Omega^k({U \cap V}) & \Omega^k(U) \\
	\Omega^k(V) & \Omega^k(M)
	\arrow["{j^*_u}", hook, from=1-2, to=1-1]
	\arrow["{j^*_v}"', hook, from=2-1, to=1-1]
	\arrow["{i^*_u}", hook, from=2-2, to=1-2]
	\arrow["{i^*_v}"', hook, from=2-2, to=2-1]
\end{tikzcd}\]

\vskip 0.1in

The following sequence of cochain complexes is short exact
\[\begin{tikzcd}
	0 & {\Omega^*(M)} & {\Omega^*(U) \oplus \Omega^*(V)} & {\Omega^*(U \cap V)} & 0
	\arrow[from=1-1, to=1-2]
	\arrow["i", from=1-2, to=1-3]
	\arrow["j", from=1-3, to=1-4]
	\arrow[from=1-4, to=1-5]
\end{tikzcd}\]
where
\begin{align*}
    i(\omega) &= (i^*_u \oplus i^*_v)\omega = (i^*_u \omega, i^*_v \omega) = (\omega|_U, \omega|_V) \\
    j(\omega, \eta) &= (j^*_u - j^*_v)(\omega, \eta) = j^*_u(\omega) - j^*_v(\eta) = \omega|_{U \cap V} - \eta|_{U \cap V}
\end{align*}
and gives rise to a long exact sequence in cohomology:
\begin{center}
    \begin{tikzpicture}[descr/.style={fill=Purple!10!white,inner sep=1.5pt}]
        \matrix (m) [
            matrix of math nodes,
            row sep=2em,
            column sep=2.5em,
            text height=1.5ex, text depth=0.25ex
        ]     
        { 0 & H^0(M) & H^0(U) \oplus H^0(V) & H^0(U \cap V) \\
            & H^1(M) & H^1(U) \oplus H^1(V) & H^1(U \cap V) \\
            & H^2(M) & H^2(U) \oplus H^2(V) & H^2(U \cap V) \\
            & \mbox{}         &                 & \mbox{}         \\
            & H^k(M) & H^k(U) \oplus H^k(V) & H^k(U \cap V) & \cdots \\
        };

        \path[overlay,->, font=\scriptsize,>=latex]
        (m-1-1) edge (m-1-2)
        (m-1-2) edge node[descr,yshift=1.2ex] {$i^*$} (m-1-3)
        (m-1-3) edge node[descr,yshift=1.2ex] {$j^*$} (m-1-4)
        (m-1-4) edge[out=355,in=175,red] node[descr,yshift=0.3ex] {$\delta$} (m-2-2)
        (m-2-2) edge (m-2-3)
        (m-2-3) edge (m-2-4)
        (m-2-4) edge[out=355,in=175,red] node[descr,yshift=0.3ex] {$\delta$} (m-3-2)
        (m-3-2) edge (m-3-3)
        (m-3-3) edge (m-3-4)
        (m-3-4) edge[out=355,in=175,dashed,red] node[descr,yshift=0.2ex,xshift=-0.6ex] {$\quad \dots \;$} (m-5-2)
        (m-5-2) edge node[descr,yshift=1.2ex] {$i^*$} (m-5-3)
        (m-5-3) edge node[descr,yshift=1.2ex] {$j^*$} (m-5-4)
        (m-5-4) edge [red] (m-5-5);
    \end{tikzpicture}
\end{center}

where $i^*$ and $j^*$ are induced from $i$ and $j$ as follows:
\begin{align*}
i^*[\omega] &= [i(\omega)] = ([\omega|_U], [\omega|_V]) \in H^k(U) \oplus H^k(V), \\
j^*([\omega], [\eta]) &= [j(\omega, \eta)] = [\omega|_{U \cap V}] - [\eta|_{U \cap V}] \in H^k(U \cap V)
\end{align*}
and $\delta: H^k(U \cap V) \rightarrow H^{k+1}(U \cup V)$ is a connecting homomorphism.
\end{theorem}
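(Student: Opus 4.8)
The plan is to establish the two claims in sequence. First I would check that the displayed row of cochain complexes is short exact; then I would invoke the standard zig-zag lemma of homological algebra, which turns any short exact sequence of cochain complexes into a long exact sequence in cohomology, while spelling out the construction of the connecting map $\delta$ so that it matches the statement. Note that $i$ and $j$ are defined degreewise by restriction of forms, and restriction (pullback along an inclusion) commutes with $d$, so both are honest cochain maps; hence the only content in the first claim is exactness at the three spots.

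For exactness I would argue spot by spot. At $\Omega^*(M)$: if $i(\omega) = (\omega|_U, \omega|_V) = 0$, then $\omega$ vanishes on each member of the open cover $\{U,V\}$ of $M$, so $\omega = 0$ and $i$ is injective. At $\Omega^*(U)\oplus\Omega^*(V)$: the composite $j\circ i$ is zero because $\omega|_U$ and $\omega|_V$ have the same restriction to $U\cap V$; conversely, if $(\omega,\eta)$ satisfies $\omega|_{U\cap V} = \eta|_{U\cap V}$, then $\omega$ and $\eta$ glue to a single global form on $M$ — this is exactly the compatibility condition in the definition of a differential form on a manifold recalled in Section \ref*{section:derham} — and this global form is sent to $(\omega,\eta)$ by $i$.

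The step I expect to be the crux is surjectivity of $j$, and it is where a partition of unity is essential. Pick smooth $\rho_U,\rho_V \colon M \to \mathbb{R}$ with $\rho_U + \rho_V = 1$, $\operatorname{supp}\rho_U \subseteq U$, $\operatorname{supp}\rho_V \subseteq V$. Given $\tau \in \Omega^*(U\cap V)$, the form $\rho_V\tau$ lives a priori only on $U\cap V$, but because $\operatorname{supp}\rho_V$ is closed in $M$ and disjoint from $U\setminus V$, the support of $\rho_V\tau$ is closed in $U$; hence it extends by zero to a smooth form $\omega\in\Omega^*(U)$. Symmetrically $-\rho_U\tau$ extends by zero to $\eta\in\Omega^*(V)$, and on $U\cap V$ one has $j(\omega,\eta) = \omega|_{U\cap V} - \eta|_{U\cap V} = \rho_V\tau + \rho_U\tau = (\rho_U+\rho_V)\tau = \tau$. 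The delicate point — the one genuinely using that $\{\rho_U,\rho_V\}$ is \emph{subordinate} to the cover and not merely a pair of bump functions — is the extension-by-zero; everything else in the theorem is formal.

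Finally, for the long exact sequence I would apply the zig-zag lemma to the short exact sequence just established and record the connecting homomorphism explicitly, since the statement names it: given a class in $H^k(U\cap V)$, represent it by a closed form $\tau$, choose a preimage $(\omega,\eta)$ with $j(\omega,\eta) = \tau$, observe that $j(d\omega,d\eta) = d\tau = 0$ forces $(d\omega,d\eta) = i(\zeta)$ for a unique $\zeta\in\Omega^{k+1}(M)$, check $d\zeta = 0$ (using injectivity of $i$ and $d^2=0$) and that $[\zeta]$ is independent of the choices, and set $\delta[\tau] = [\zeta]$. The remaining exactness at each node of the long sequence, together with the identification of the induced maps with the $i^*$ and $j^*$ displayed above, are routine diagram chases with no real obstacle beyond bookkeeping.
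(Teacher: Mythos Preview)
Your proposal is correct and follows essentially the same route as the paper: verify exactness of the short sequence at the three spots (injectivity of $i$, $\ker j = \operatorname{im} i$ via gluing, and surjectivity of $j$ via a partition of unity subordinate to $\{U,V\}$), then invoke the Zigzag Lemma to obtain the long exact sequence. Your treatment of the extension-by-zero step and of the connecting homomorphism $\delta$ is in fact more explicit than the paper's, which defers the construction of $\delta$ to the cited references.
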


In defining the connecting homomorphism, one must choose a cocycle $c$ to represent the cohomology class $[c] \in H^k(U \cap V)$ and an element that maps to $c$ under $j$.
For a precise formulation of $\delta$, see \cite{tu2011manifolds}, page 291.

\subsubsection{Example}

We compute the de Rham cohomology of the circle $S^1$ (\cite{tu2011manifolds}, 26.2).
Consider an open cover of $S^1$ consisting of two open arcs $U$ and $V$. 
\begin{center}
    \includegraphics[width=0.4\textwidth]{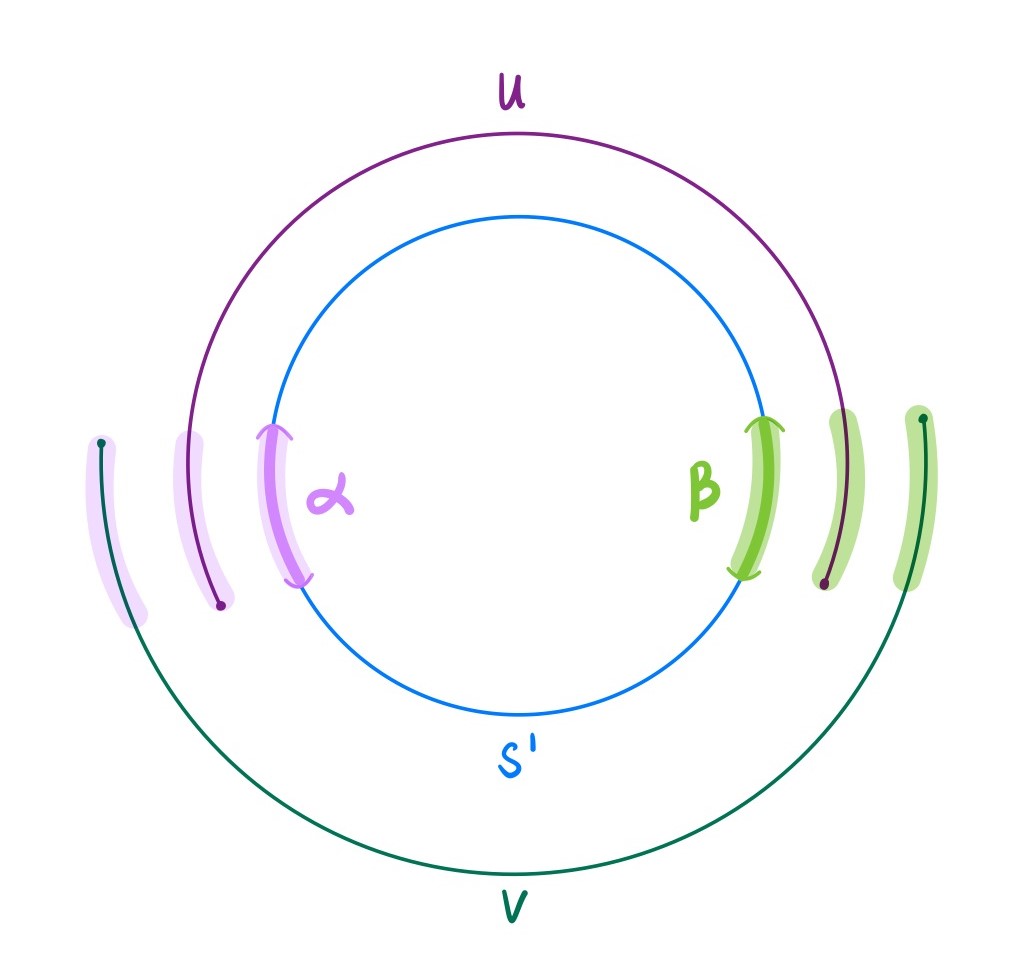}
\end{center}
Since an open arc is diffeomorphic to $\mathbb{R}^1$, the cohomology rings $H^*(U)$ and $H^*(V)$ are isomorphic to $H^*(\mathbb{R}^1)$.
Now, their intersection $U \cap V$ is the disjoint union of open arcs $\alpha$ and $\beta$. 
Thus, $H^*(U \cap V)$ is isomorphic to that of the disjoint union $H^*(\mathbb{R}^1 \coprod \mathbb{R}^1)$. 
We obtain the following Mayer-Vietoris sequence:

\vskip 0.2in

\noindent \adjustbox{scale=0.8,center}{
\begin{tikzcd}
	& {} &&&& {} \\
	& 0 & 0 & 0 && {H^2(S^1)} & {H^2(U) \oplus H^2(V)} & {H^2(U \cap V)} \\
	& {H^1(S^1)} & 0 & 0 && {H^1(S^1)} & {H^1(U) \oplus H^1(V)} & {H^1(U \cap V)} \\
	0 & {\mathbb{R}} & {\mathbb{R} \oplus \mathbb{R}} & {\mathbb{R} \oplus \mathbb{R}} & 0 & {H^0(S^1)} & {H^0(U) \oplus H^0(V)} & {H^0(U \cap V)}
	\arrow[from=4-1, to=4-2]
	\arrow[from=4-2, to=4-3]
	\arrow[from=4-3, to=4-4]
	\arrow[from=4-5, to=4-6]
	\arrow[from=4-6, to=4-7]
	\arrow[from=4-7, to=4-8]
	\arrow[from=4-8, to=3-6]
	\arrow[from=3-6, to=3-7]
	\arrow[from=3-7, to=3-8]
	\arrow[from=3-8, to=2-6]
	\arrow[from=2-6, to=2-7]
	\arrow[from=2-7, to=2-8]
	\arrow["\cdots"{description}, from=2-8, to=1-6]
	\arrow[from=4-4, to=3-2]
	\arrow[from=3-2, to=3-3]
	\arrow[from=3-3, to=3-4]
	\arrow[from=3-4, to=2-2]
	\arrow[from=2-2, to=2-3]
	\arrow[from=2-3, to=2-4]
	\arrow["\cdots"{description}, from=2-4, to=1-2]
\end{tikzcd}
}

\vskip 0.2in

\noindent We then extract the following exact sequence:
\[\begin{tikzcd}
	0 & {\mathbb{R}} & {\mathbb{R} \oplus \mathbb{R}} & {\mathbb{R} \oplus \mathbb{R}} & {H^1(S^1)} & 0
	\arrow[from=1-1, to=1-2]
	\arrow["{i^*}", from=1-2, to=1-3]
	\arrow["{j^*}", from=1-3, to=1-4]
	\arrow["{d^*}", from=1-4, to=1-5]
	\arrow[from=1-5, to=1-6]
\end{tikzcd}\]
By the Rank Nullity theorem $\sum_{k=0}^{3} (-1)^k \dim H^k = 0$, so 
\begin{align*}
    &(-1)^0 \dim (\mathbb{R}) + (-1)^1 \dim (\mathbb{R} \oplus \mathbb{R}) +(-1)^2 \dim (\mathbb{R} \oplus \mathbb{R}) + (-1)^3 \dim (H^1(S^1)) = 0 \\
    &\implies 1 - 2 + 2 - \dim H^1(S^1) = 0 \\
    &\implies \dim H^1(S^1) = 1
\end{align*}
We conclude that $\dim H^1(S^1) \cong \mathbb{R}$. Hence, the cohomology of the circle is given by:
\[
H^k(S^1) = \begin{cases}
\mathbb{R} & \text{for } k = 0, 1 \\
0 & \text{otherwise}
\end{cases}
\]

\subsection{Homotopy Invariance}
The next tool is simple to state, but it is a very useful result. Since we are primarily interested in smooth manifolds, our notion of homotopy will be smooth homotopy. It deviates from the typical topological notion of homotopy only in that we require all the involved maps to be smooth.

\begin{theorem}{Theorem \ref*{section:tools}.2: Homotopy Axiom (\cite{tu2011manifolds}, 29.1)}
    Two smoothly homotopic maps $f, g: M \to N$ of manifolds induce the same map in cohomology:
    \[ f^* = g^* : H^k(N) \to H^k(M) \]
\end{theorem}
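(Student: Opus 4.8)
The plan is to reduce the statement to a single model situation and there construct an explicit cochain homotopy. The model is the projection $\pi\colon M\times\mathbb{R}\to M$ together with the sections $s_t\colon M\to M\times\mathbb{R}$, $s_t(x)=(x,t)$. Indeed, after reparametrizing the homotopy parameter so that the homotopy is constant near $t=0$ and $t=1$ and then extending it over all of $\mathbb{R}$, a smooth homotopy from $f$ to $g$ becomes a smooth map $F\colon M\times\mathbb{R}\to N$ with $f=F\circ s_0$ and $g=F\circ s_1$. Consequently $f^{*}=s_0^{*}\circ F^{*}$ and $g^{*}=s_1^{*}\circ F^{*}$ on cohomology, so it is enough to prove $s_0^{*}=s_1^{*}\colon H^{k}(M\times\mathbb{R})\to H^{k}(M)$.

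To that end I would first prove that $\pi^{*}\colon H^{k}(M)\to H^{k}(M\times\mathbb{R})$ is an isomorphism. Observe that every $k$-form $\omega$ on $M\times\mathbb{R}$ has a unique decomposition into terms of two types: \textbf{(I)} $(\pi^{*}\alpha)\,h(x,t)$ with $\alpha\in\Omega^{k}(M)$, and \textbf{(II)} $(\pi^{*}\beta)\,h(x,t)\,dt$ with $\beta\in\Omega^{k-1}(M)$, where $h$ is smooth on $M\times\mathbb{R}$; this uses that $t$ is a global coordinate on the $\mathbb{R}$-factor, so the splitting is preserved by coordinate changes on $M$. Define the operator $K\colon\Omega^{k}(M\times\mathbb{R})\to\Omega^{k-1}(M\times\mathbb{R})$ to kill every form of type (I) and to act on type (II) by
\[
K\bigl((\pi^{*}\beta)\,h(x,t)\,dt\bigr)=(\pi^{*}\beta)\int_{0}^{t}h(x,s)\,ds .
\]

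The crux is the chain-homotopy identity, which on $\Omega^{q}(M\times\mathbb{R})$ reads
\[
\mathrm{id}-\pi^{*}\circ s_0^{*}=(-1)^{q-1}\bigl(d\circ K-K\circ d\bigr),
\]
checked case by case on the two types of forms: on type (II) it is the fundamental theorem of calculus, $\int_{0}^{t}\partial_{s}h\,ds=h(x,t)-h(x,0)$, while on type (I) it records that such forms involve no $dt$ and that $s_0^{*}dt=0$. Granting this, if $\omega$ is closed then $\omega-\pi^{*}s_0^{*}\omega=(-1)^{q-1}d(K\omega)$ is exact, so $\pi^{*}\circ s_0^{*}=\mathrm{id}$ on $H^{k}(M\times\mathbb{R})$; since $\pi\circ s_0=\mathrm{id}_{M}$ gives $s_0^{*}\circ\pi^{*}=\mathrm{id}$ already at the level of forms, $\pi^{*}$ is an isomorphism with inverse $s_0^{*}$. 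The same reasoning applied to $s_1$ (using $\pi\circ s_1=\mathrm{id}_{M}$ and the fact that $\pi^{*}$ is now known to be invertible) forces $s_1^{*}=(\pi^{*})^{-1}=s_0^{*}$ on cohomology, and therefore $f^{*}=g^{*}$.

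I expect the main obstacle to be bookkeeping rather than conceptual: verifying that $K$ and the type-(I)/type-(II) decomposition are globally well defined across an atlas, and — more delicately — tracking the degree-dependent signs in the chain-homotopy identity that arise because $d$ is an antiderivation. The analytic ingredients (differentiating under the integral sign, the fundamental theorem of calculus) are routine, and the reduction of the general statement to the projection $M\times\mathbb{R}\to M$ is the only genuinely structural step.
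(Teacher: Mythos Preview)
Your proposal is correct and follows essentially the same approach the paper sketches: reduce to the $0$- and $1$-sections $s_0,s_1\colon M\to M\times\mathbb{R}$ of the trivial line bundle and then produce a cochain homotopy (your integration operator $K$) showing $s_0^{*}=s_1^{*}$ on cohomology. The paper itself only outlines this strategy and defers the details to Tu's Chapter~29; you have supplied precisely those details, so there is nothing to add.
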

This allows us to identify two important results.
Firstly, homotopy equivalence induces an isomorphism in cohomology:
\begin{corollary}{Corollary \ref*{section:tools}.2.1: Homotopy Equivalence (\cite{tu2011manifolds}, 27.4)}
    If $f: M \to N$ is a homotopy equivalence, then the induced map in cohomology
    \[ f^* : H^*(N) \to H^*(M) \]
    is an isomorphism.
\end{corollary}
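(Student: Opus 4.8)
The plan is to derive this purely formally from the Homotopy Axiom (Theorem \ref*{section:tools}.2) together with functoriality of the pullback. First I would unwind the hypothesis: since $f \colon M \to N$ is a homotopy equivalence, there is a smooth map $g \colon N \to M$ with $g \circ f \simeq \mathrm{id}_M$ and $f \circ g \simeq \mathrm{id}_N$, where $\simeq$ denotes smooth homotopy. (If one only assumes a \emph{continuous} homotopy equivalence, I would first recall the standard smoothing facts — every continuous map between smooth manifolds is homotopic to a smooth one, and every continuous homotopy between smooth maps can be replaced by a smooth one — so that the smooth and topological notions of homotopy equivalence coincide; I would state this as a remark rather than prove it.)

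Next I would record the two formal properties of $(-)^*$ that the argument rests on. First, the pullback on de Rham cohomology is contravariantly functorial: $(\mathrm{id}_M)^* = \mathrm{id}_{H^*(M)}$ and $(\psi \circ \varphi)^* = \varphi^* \circ \psi^*$ for composable smooth maps $\varphi, \psi$. At the level of forms this is immediate from the definition of the pullback and the chain rule, and it descends to cohomology because $f^*$ commutes with $d$ and hence sends closed forms to closed forms and exact forms to exact forms. Second, the Homotopy Axiom gives $\varphi^* = \psi^*$ in cohomology whenever $\varphi \simeq \psi$ smoothly.

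Combining these is then a two-line diagram chase. Applying $(-)^*$ to the smooth homotopy $g \circ f \simeq \mathrm{id}_M$ yields
\[
f^* \circ g^* = (g \circ f)^* = (\mathrm{id}_M)^* = \mathrm{id}_{H^*(M)},
\]
and applying it to $f \circ g \simeq \mathrm{id}_N$ yields
\[
g^* \circ f^* = (f \circ g)^* = (\mathrm{id}_N)^* = \mathrm{id}_{H^*(N)}.
\]
Hence $f^* \colon H^*(N) \to H^*(M)$ is a linear isomorphism with two-sided inverse $g^*$; since $f^*$ preserves the grading, it restricts to an isomorphism $H^k(N) \to H^k(M)$ for every $k$.

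I do not expect a genuine obstacle: all the real content is contained in Theorem \ref*{section:tools}.2, and everything else is bookkeeping with functoriality. The one point I would be careful about in the write-up is ensuring the homotopies involved are smooth (or explicitly justifying why we may take them to be), since the Homotopy Axiom as stated applies only to \emph{smoothly} homotopic maps.
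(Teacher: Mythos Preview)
Your proposal is correct and is exactly the standard derivation the paper has in mind: the paper states this as an immediate corollary of the Homotopy Axiom (Theorem \ref*{section:tools}.2) without giving an explicit proof, and your argument via functoriality of pullback together with $g\circ f \simeq \mathrm{id}_M$, $f\circ g \simeq \mathrm{id}_N$ is precisely how one unpacks that. Your remark about smoothing continuous homotopies is a sensible caveat and goes slightly beyond what the paper spells out.
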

In turn, this allows us to characterize the cohomology of contractible spaces. 
\begin{corollary}{Corollary \ref*{section:tools}.2.2: Poincaré Lemma (\cite{tu2011manifolds}, 27.13)}
    Since $\mathbb{R}^n$ has the homotopy type of a point, the cohomology of $\mathbb{R}^n$ is:
    \[ H^k(\mathbb{R}^n) = \begin{cases} \mathbb{R}, & \text{for } k = 0, \\ 0, & \text{for } k > 0 \end{cases} \]
\end{corollary}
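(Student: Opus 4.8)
The plan is to deduce this directly from Corollary \ref*{section:tools}.2.1 together with the contractibility of $\mathbb{R}^n$, reducing everything to a computation on a point. First I would exhibit an explicit \emph{smooth} homotopy equivalence between $\mathbb{R}^n$ and a one-point space $\{0\}$. Let $r \colon \mathbb{R}^n \to \{0\}$ be the constant map and $\iota \colon \{0\} \hookrightarrow \mathbb{R}^n$ the inclusion. Then $r \circ \iota = \mathrm{id}_{\{0\}}$ on the nose, while $\iota \circ r$ is the constant map $x \mapsto 0$, which is smoothly homotopic to $\mathrm{id}_{\mathbb{R}^n}$ via the straight-line homotopy $H \colon \mathbb{R}^n \times [0,1] \to \mathbb{R}^n$, $H(x,t) = tx$. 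Since $H$ is polynomial in $(x,t)$ it is smooth, so $\iota$ is a smooth homotopy equivalence.

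Next I would apply Corollary \ref*{section:tools}.2.1 to $\iota$: it yields an isomorphism of graded vector spaces $\iota^* \colon H^*(\mathbb{R}^n) \xrightarrow{\ \sim\ } H^*(\{0\})$. It therefore remains only to compute the de Rham cohomology of a point. A single point is a connected $0$-dimensional manifold, so by Corollary \ref*{section:derham}.1.1 we get $H^0(\{0\}) = \mathbb{R}$, and by Proposition \ref*{section:derham}.2 we get $H^k(\{0\}) = 0$ for all $k > 0$ (indeed $\Omega^k(\{0\}) = 0$ for $k \geq 1$, the tangent space being trivial, so there is literally nothing of which to take cohomology). Combining with the isomorphism from the previous step gives $H^0(\mathbb{R}^n) = \mathbb{R}$ and $H^k(\mathbb{R}^n) = 0$ for $k > 0$, as claimed.

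As for obstacles: there is no deep obstacle once the homotopy axiom is in hand — essentially all the content has been front-loaded into Theorem \ref*{section:tools}.2 and its corollary. The one point needing a moment's care is staying inside the smooth category: the retracting homotopy must be smooth, which $H(x,t) = tx$ plainly is, and one should be content that ``homotopy equivalence'' in Corollary \ref*{section:tools}.2.1 is understood in the smooth sense, exactly as provided here. Should one instead wish to bypass the homotopy machinery entirely, the harder route would be a direct argument constructing an explicit cochain homotopy between the identity on $\Omega^*(\mathbb{R}^n)$ and the zero-degree projection — the classical integration-along-the-fiber operator underlying the Poincaré lemma — but given the tools already established, that would merely reprove Theorem \ref*{section:tools}.2 in this special case.
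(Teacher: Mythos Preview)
Your proposal is correct and follows exactly the approach the paper intends: the corollary is stated as an immediate consequence of Corollary~\ref*{section:tools}.2.1 applied to the (smooth) homotopy equivalence between $\mathbb{R}^n$ and a point, and you have simply supplied the explicit contraction $H(x,t)=tx$ and the computation of $H^*(\{0\})$ via Corollary~\ref*{section:derham}.1.1 and Proposition~\ref*{section:derham}.2. There is nothing to add or correct.
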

In other words, any contractible manifold will have the same cohomology as a
point.

\subsubsection{Example}

We consider the de Rham cohomology of $M = \mathbb{R}^2 - \{p\} $(\cite{tu2011manifolds}, Example 27.14).
For any point \(p\) in the plane \(\mathbb{R}^2\), consider the translation \(x \mapsto x - p\).
This translation is a diffeomorphism of the plane with point \(p\) removed, denoted \(\mathbb{R}^2 - \{p\}\), with the punctured plane \(\mathbb{R}^2 - \{(0, 0)\}\). 
The punctured plane and the circle \(S^1\) have the same homotopy type, consequently, they have isomorphic cohomology. Thus, by our previous example,
\[
H^k(M) = H^k(S^1) = \begin{cases}
\mathbb{R} & \text{for } k = 0, 1 \\
0 & \text{otherwise}
\end{cases}
\]

\subsection{Poincaré Duality}
Previously, we've defined two ``types'' of cohomology groups: the de Rham cohomology groups \(H^k\) and the compactly supported de Rham cohomology groups \(H_c^{k}\). Poincaré duality tells us these are, in fact, strongly related: if \(M\) is a connected oriented \(n\)-manifold, then \(H_c^{(n-k)}(M)\) is the vector space dual of \(H^k(M)\).

\begin{theorem}{Theorem \ref*{section:tools}.3: Poincaré Duality (\cite{madsen1997calculus}, 13)}
    Given an oriented manifold $M$ of dimension $n$,
    \[ H^k(M) \cong H_c^{n-k}(M)^*, \quad \text{for } k \in \mathbb{Z}, \]
    where $H_c^{n-k}(M)^*$ denotes the dual vector space of linear forms on $H^{n-k}(M)$ with compact support.
\end{theorem}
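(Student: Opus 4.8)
The plan is to produce an explicit pairing between the two cohomologies and then show, by an induction built on Mayer--Vietoris, that the map it induces is an isomorphism. First I would introduce the bilinear \emph{integration pairing}
\[
H^k(M)\times H^{n-k}_c(M)\longrightarrow\mathbb{R},\qquad ([\omega],[\eta])\longmapsto\int_M\omega\wedge\eta,
\]
which makes sense because $\omega\wedge\eta$ is a compactly supported $n$-form on the oriented manifold $M$. It is well defined on cohomology: replacing $\omega$ by $\omega+d\alpha$ changes the integrand by $d\alpha\wedge\eta=d(\alpha\wedge\eta)$ (using $d\eta=0$), and $\alpha\wedge\eta$ has compact support, so Stokes' theorem kills the correction term; the same computation handles a change $\eta\mapsto\eta+d\beta$ with $\beta$ compactly supported. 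The pairing therefore descends to a \emph{Poincaré duality map}
\[
D_M\colon H^k(M)\longrightarrow H^{n-k}_c(M)^*,\qquad D_M[\omega]=\Bigl([\eta]\mapsto\int_M\omega\wedge\eta\Bigr),
\]
and the theorem is the assertion that $D_M$ is an isomorphism for all $k$ and all oriented $M$.

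Next I would assemble the two exact sequences that drive an induction over a finite good cover. The first is the ordinary Mayer--Vietoris sequence, which is Theorem~\ref*{section:tools}.1. The second is its compactly supported analogue: for $M=U\cup V$ the \emph{extension-by-zero} maps give a short exact sequence of complexes
\[
0\longrightarrow\Omega^*_c(U\cap V)\longrightarrow\Omega^*_c(U)\oplus\Omega^*_c(V)\longrightarrow\Omega^*_c(M)\longrightarrow 0,
\]
where exactness on the right needs a partition of unity subordinate to $\{U,V\}$, hence a long exact sequence
\[
\cdots\to H^k_c(U\cap V)\to H^k_c(U)\oplus H^k_c(V)\to H^k_c(M)\xrightarrow{\ \delta_c\ }H^{k+1}_c(U\cap V)\to\cdots
\]
running in the opposite direction to ordinary Mayer--Vietoris. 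Applying the exact functor $\operatorname{Hom}(-,\mathbb{R})$ and replacing $k$ by $n-k$ converts this into a long exact sequence with the same variance as the ordinary one, and the maps $D_U,D_V,D_{U\cap V},D_{U\cup V}$ fit into a ladder between the two. I would then check that the ladder commutes up to sign: the squares built from $i^*$ and $j^*$ are immediate, while the square relating the connecting map $\delta$ to the transpose $\delta_c^*$ requires chasing both connecting homomorphisms through the same partition of unity and keeping track of Koszul signs (the stray signs are harmless, as one may absorb them into the vertical maps). With commutativity in hand, the five lemma shows that whenever $D$ is an isomorphism for $U$, $V$ and $U\cap V$ it is one for $U\cup V$.

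Finally I would run the induction on the number of open sets in a finite good cover. The base case is a single chart $U\cong\mathbb{R}^n$: here $H^*(\mathbb{R}^n)$ is $\mathbb{R}$ concentrated in degree $0$ (Corollary~\ref*{section:tools}.2.2) and $H^*_c(\mathbb{R}^n)$ is $\mathbb{R}$ concentrated in degree $n$ (the compactly supported Poincaré lemma, obtained by integration over the fibre, done inductively on $n$), and $D_{\mathbb{R}^n}$ is an isomorphism because $1\otimes\eta\mapsto\int_{\mathbb{R}^n}\eta$ is nonzero on a bump $n$-form. For the inductive step, given a good cover $\{U_0,\dots,U_p\}$ of a finite-type manifold $M$, set $U=U_0\cup\cdots\cup U_{p-1}$ and $V=U_p$; then $U$ and $U\cap V=\bigcup_{i<p}(U_i\cap V)$ each admit good covers with at most $p$ members, $V\cong\mathbb{R}^n$, and the preceding paragraph completes the step. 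I expect the main obstacle to be exactly this sign-and-connecting-map bookkeeping in the ladder, together with developing the compactly supported Mayer--Vietoris sequence and the compactly supported Poincaré lemma from scratch; a secondary subtlety is that the argument as organized proves duality only for manifolds admitting a \emph{finite} good cover, so reaching an arbitrary oriented $M$ requires a further passage to the limit over an exhaustion by finite-type open submanifolds, complicated by the fact that dualization turns the direct limit computing $H^*_c$ into an inverse limit.
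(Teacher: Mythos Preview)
Your proposal is correct and is, in fact, considerably more detailed than what the paper itself provides: the paper does not prove Poincar\'e duality at all, but only remarks that one constructs a natural bilinear pairing between $H^k(M)$ and $H_c^{n-k}(M)$ yielding a linear map $H^k(M)\to H_c^{n-k}(M)^*$, and then defers to \cite{guillemin2019differential}, \cite{madsen1997calculus}, and \cite{bott1982differential} for the argument. Your integration pairing is exactly the pairing the paper alludes to, and the Mayer--Vietoris induction over a finite good cover that you outline is precisely the Bott--Tu proof the paper cites (Chapter~5, p.~44), including the compactly supported Mayer--Vietoris sequence, the base case via the compactly supported Poincar\'e lemma, and the Five Lemma step; you are also right to flag the sign bookkeeping in the connecting-homomorphism square and the separate limit argument needed to pass from finite-type to arbitrary oriented manifolds as the genuine technical work.
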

Note that the reverse $H_c^k(M) \cong H^{n-k}(M)^*$ is not always true. The lack of symmetry in this context stems from the fact that while the dual of a direct sum is a direct product, the dual of a direct product is not a direct sum.
It follows that Poincaré Duality can be used to immediately characterize connected, oriented manifolds.
\begin{corollary}{Corollary \ref*{section:tools}.3.1: Connected Manifolds (\cite{bott1982differential}, 5.8)}
    If $M$ is a connected oriented manifold of dimension $n$, then
    \[ H_c^n(M) \cong \mathbb{R} \]
    In particular, if $M$ is \textit{compact}, oriented, and connected,
    \[ H^n(M) \cong \mathbb{R} \]
\end{corollary}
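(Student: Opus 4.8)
The plan is to read off both statements directly from Poincaré Duality (Theorem~\ref*{section:tools}.3) combined with the computation of the zeroth cohomology of a connected manifold (Corollary~\ref*{section:derham}.1.1). First I would instantiate Poincaré Duality at $k = 0$: since $M$ is an oriented $n$-manifold, this gives $H^0(M) \cong H_c^{n}(M)^*$. Because $M$ is connected, Corollary~\ref*{section:derham}.1.1 says $H^0(M) \cong \mathbb{R}$, so we obtain $H_c^n(M)^* \cong \mathbb{R}$.

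Next I would pass from the dual space back to the space itself. The one point requiring care is that an isomorphism $V^* \cong \mathbb{R}$ does not immediately pin down $V$ when $V$ is allowed to be infinite-dimensional; however, one always has $\dim V \le \dim V^*$, so $\dim V^* = 1$ forces $\dim V \le 1$, while $V^* \neq 0$ forces $V \neq 0$. Applying this with $V = H_c^n(M)$ yields $\dim H_c^n(M) = 1$, i.e. $H_c^n(M) \cong \mathbb{R}$, which is the first assertion. (One could instead appeal to the sharper fact that the integration pairing $H^k(M) \times H_c^{n-k}(M) \to \mathbb{R}$ is perfect, but the crude dimension count suffices here.) For the ``in particular'' clause, I would observe that on a compact manifold every differential form is automatically compactly supported, so $\Omega^k_c(M) = \Omega^k(M)$ for every $k$, hence $Z^k_c(M) = Z^k(M)$ and $B^k_c(M) = B^k(M)$, and therefore $H^k_c(M) = H^k(M)$. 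Taking $k = n$ and combining with the first part gives $H^n(M) = H^n_c(M) \cong \mathbb{R}$.

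The only genuine obstacle is the linear-algebra subtlety in the middle step: ensuring that $H_c^n(M)^* \cong \mathbb{R}$ really forces $H_c^n(M) \cong \mathbb{R}$ and not merely $\dim H_c^n(M) \le 1$. This is resolved by noting that $H_c^n(M)$ is nonzero precisely because its dual is nonzero, together with the general inequality $\dim V \le \dim V^*$. Beyond that, the proof is simply a matter of quoting Theorem~\ref*{section:tools}.3 and the identification of compactly supported cohomology with ordinary cohomology in the compact case.
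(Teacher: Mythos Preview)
Your proposal is correct and matches the paper's intended approach: the paper presents this result as an immediate corollary of Poincar\'e Duality (Theorem~\ref*{section:tools}.3) without giving an explicit proof, and your argument is precisely the natural unpacking of that implication---instantiate the duality at $k=0$, use connectedness to identify $H^0(M)\cong\mathbb{R}$, and handle the compact case via $H^k_c(M)=H^k(M)$. The extra care you take with the linear-algebra step (deducing $V\cong\mathbb{R}$ from $V^*\cong\mathbb{R}$) is a nice touch that the paper leaves implicit.
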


\subsection{The Künneth Formula}
The last tool we introduce, the Künneth formula, allows us to express the cohomology of the direct product of spaces in terms of the cohomology of its factors.

\begin{theorem}{Theorem \ref*{section:tools}.4: Künneth Formula (\cite{bott1982differential}, 5.9)}
    The cohomology of the product of two smooth manifolds \(M\) and \(N\) is the tensor product:
    \[H^*(M \times N) = H^*(M) \otimes H^*(N).\]
    This means that 
    \[H^k(M \times N) = \bigoplus_{p+q=k} H^p(M) \otimes H^q(N)\]
    for every nonnegative integer \(k\)
\end{theorem}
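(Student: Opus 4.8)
The plan is to construct an explicit natural map
\[
\psi \colon H^*(M) \otimes_{\mathbb{R}} H^*(N) \to H^*(M \times N), \qquad [\omega] \otimes [\eta] \mapsto [\pi_M^*\omega \wedge \pi_N^*\eta],
\]
where $\pi_M \colon M \times N \to M$ and $\pi_N \colon M \times N \to N$ are the projections, and then prove it is an isomorphism by a Mayer--Vietoris induction. First I would check $\psi$ is well defined: since $d$ commutes with pullback and is an antiderivation, $\pi_M^*\omega \wedge \pi_N^*\eta$ is closed whenever $\omega$ and $\eta$ are closed, and if either factor is exact the product is exact (e.g. $\pi_M^*(d\tau)\wedge\pi_N^*\eta = \pm\, d(\pi_M^*\tau\wedge\pi_N^*\eta)$ when $\eta$ is closed), so $\psi$ descends to the tensor product. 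Bilinearity over $\mathbb{R}$ is clear, and one checks $\psi$ is a homomorphism for the wedge/cup products, although only the statement that it is a linear isomorphism is needed here.

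Next, the core of the argument: show $\psi$ is an isomorphism by induction on the number of sets in a finite good cover of $M$. (I will assume $M$ admits such a cover; this includes all cases of interest, and the general case follows by a standard direct-limit argument.) For the base case $M$ is diffeomorphic to $\mathbb{R}^n$, hence contractible, so by the Poincaré lemma (Corollary \ref*{section:tools}.2.2) $H^*(M)=\mathbb{R}$ concentrated in degree $0$, while homotopy invariance (Theorem \ref*{section:tools}.2) applied to $\pi_N$ gives $H^*(M\times N)\cong H^*(N)$; one verifies $\psi$ realizes precisely this identification. For the inductive step, write $M = U \cup V$ with $U$ a union of all but one set of the good cover and $V$ the remaining set, so that $U$, $V$, and $U\cap V$ all have good covers with fewer sets and hence $\psi$ is an isomorphism for each of them (and for the finitely many components of $U\cap V$, for which product equals direct sum). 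Since $\mathbb{R}$ is a field, $-\otimes_{\mathbb{R}} H^*(N)$ is exact, so tensoring the Mayer--Vietoris long exact sequence of $M=U\cup V$ (Theorem \ref*{section:tools}.1) with $H^*(N)$ yields a long exact sequence; the Mayer--Vietoris sequence of $(U\times N)\cup(V\times N) = M\times N$ is a second one; and the maps $\psi$ for $U\cap V$, $U\oplus V$, and $M$ assemble into a commuting ladder between them.

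The main obstacle is verifying that this ladder commutes, and in particular that $\psi$ intertwines the two connecting homomorphisms $\delta$. Compatibility with the pullback maps $i^*$ and $j^*$ is immediate, since the projection $U\times N\to U$ is compatible with all the relevant inclusions and pullback is functorial. The $\delta$-square is the delicate part: one must unwind the construction of $\delta$ (choose forms on $U\times N$ and $V\times N$ restricting to a given cocycle on $(U\cap V)\times N$, using a partition of unity subordinate to $\{U,V\}$ pulled back along $\pi_M$, then apply $d$), and track a sign arising from commuting $d$ past $\pi_N^*\eta$ of degree $q$. Once commutativity is in hand, the five lemma forces $\psi$ to be an isomorphism for $M$, completing the induction. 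Finally, separating the isomorphism by total degree gives $H^k(M\times N)=\bigoplus_{p+q=k}H^p(M)\otimes H^q(N)$, as claimed.
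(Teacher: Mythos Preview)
Your proposal is correct and follows precisely the approach the paper sketches: define $\psi$ via $[\omega]\otimes[\eta]\mapsto[\pi_M^*\omega\wedge\pi_N^*\eta]$ and prove it is an isomorphism by Mayer--Vietoris induction over a finite good cover of $M$, with the Poincar\'e lemma/homotopy invariance handling the base case and the five lemma the inductive step. The paper itself only outlines this and defers to \cite{bott1982differential}, so you have in fact supplied more detail (in particular the well-definedness check and the commutativity with $\delta$) than the paper does.
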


\subsubsection{Example}

Consider the torus $T^2 = S^1 \times S^1$. 
Since $H^0(S^1) = 0$ and $H^1(S^1) = \mathbb{R}$, we have
\begin{equation*}
H^0(T^2) = \mathbb{R} \otimes \mathbb{R} = \mathbb{R}
\end{equation*}
\begin{equation*}
H^1(T^2) = (\mathbb{R} \otimes \mathbb{R}) \oplus (\mathbb{R} \otimes \mathbb{R}) = \mathbb{R} \oplus \mathbb{R}
\end{equation*}
\begin{equation*}
H^2(T^2) = H^0(T^2) = \mathbb{R}
\end{equation*}
Note the Poincaré duality $H^0(T^2) = H^2(T^2)$.

\section{Repaying Technical Debt}

With the four tools we have developed, we can now compute the de Rham cohomology of many manifolds.
It's time to repay our technical debt and prove the theorems we have stated without proof.

\subsection{Mayer-Vietoris}
In this section, we present a proof of the Mayer–Vietoris theorem. 
The proof can be divided into two parts. The first part is an algebraic result known as the ``Zigzag Lemma'', the proof of which we omit but can be found in \cite{madsen1997calculus} (Chapter 4). The second part is a differential geometric part which demonstrates that we satisfy the necessary conditions of the Zigzag Lemma.

\subsubsection{The Tools}

\begin{lemma}{Lemma 4.1: The Zigzag Lemma (\cite{lee2012smooth}, 17.40)}
    Given a short exact sequence of chain complexes 
    \[
    0 \rightarrow A^* \xrightarrow[]{f} B^* \xrightarrow[]{g} C^* \rightarrow 0
    \]
    for each positive integer $k$ there is a linear map 
    \[
    \delta: H^k(C^*) \rightarrow  H^{k+1}(A^*) 
    \]
    such that the following sequence is exact:
    \[
    \cdots \xrightarrow[]{\delta} H^k(A^*) \xrightarrow[]{f^*} H^k(B^*) \xrightarrow[]{g^*} H^k(C^*)  \xrightarrow[]{\delta} H^{k+1}(A^*) \xrightarrow[]{f^*} \cdots
    \]
\end{lemma}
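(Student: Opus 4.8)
The plan is to prove this by the standard ``diagram chase'': first construct $\delta$ explicitly, then verify exactness at each of the three kinds of node in the long sequence. Throughout I write $d$ for the differential of each of the three complexes, the context making clear which is meant, and I use repeatedly that $f$ and $g$ are cochain maps, so $fd = df$ and $gd = dg$, together with exactness of the row in each degree: $f$ is injective, $g$ is surjective, and $\operatorname{im} f = \ker g$.

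First, the construction of $\delta$. Given $[c] \in H^k(C^*)$, choose a representing cocycle $c \in C^k$. Since $g \colon B^k \to C^k$ is surjective, pick $b \in B^k$ with $g(b) = c$. Then $g(db) = d(g(b)) = dc = 0$, so $db \in \ker g = \operatorname{im} f$; by injectivity of $f$ there is a unique $a \in A^{k+1}$ with $f(a) = db$. This $a$ is a cocycle, since $f(da) = d(f(a)) = d(db) = 0$ and $f$ is injective, so we set $\delta[c] := [a]$. Next I would check that $\delta$ is well defined and linear: replacing $b$ by $b + f(a')$ changes $a$ by $da'$, so $[a]$ is unaffected; replacing $c$ by a cohomologous cocycle $c + dc'$ and lifting $c'$ to $B^{k-1}$ similarly changes $a$ only by a coboundary. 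Linearity follows because all the choices may be made additively.

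Then comes exactness, which amounts to three separate ``inclusion both ways'' arguments. At $H^k(B^*)$: the composite $g^* \circ f^*$ vanishes because $g \circ f = 0$, and if $g^*[b] = 0$ then $g(b) = dc'$ for some $c'$; lifting $c'$ to $b' \in B^{k-1}$, the element $b - db'$ is a cocycle lying in $\operatorname{im} f$, giving a class in $H^k(A^*)$ mapping to $[b]$. At $H^k(C^*)$: if $[c] = g^*[b]$ with $b$ a cocycle, then in the construction of $\delta$ we may use that very $b$, whence $db = 0$, $a = 0$, and $\delta \circ g^* = 0$; conversely if $\delta[c] = 0$ then $a = da'$, and replacing $b$ by $b - f(a')$ produces a cocycle in $B^k$ mapping onto $c$, so $[c] \in \operatorname{im} g^*$. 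At $H^{k+1}(A^*)$: $f^* \circ \delta = 0$ since $f(a) = db$ is a coboundary; conversely if $f^*[a] = 0$ then $f(a) = db$ for some $b \in B^k$, and $g(b)$ is then a cocycle in $C^k$ with $\delta[g(b)] = [a]$.

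The work is entirely formal — the statement holds verbatim in any abelian category — so there is no genuine conceptual obstacle. The only thing demanding care is the bookkeeping in the last two steps: keeping track of which degree ($k-1$, $k$, or $k+1$) each element inhabits, and invoking exactness of the row at the correct spot. This is precisely why the paper is content to cite the lemma rather than reproduce the chase.
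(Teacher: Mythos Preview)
Your diagram chase is correct and is exactly the standard argument one would find in the cited references. The paper itself does not actually prove this lemma: its entire proof reads ``See \cite{madsen1997calculus}, Theorem 4.9,'' deferring the chase to an outside source. So you have supplied more than the paper does, and you are right in your final remark that the author chose to cite rather than reproduce it. The only minor point worth tightening is the well-definedness check when $c$ is replaced by $c + dc'$: you should say explicitly that a lift of $c + dc'$ may be taken to be $b + db'$ where $g(b') = c'$, so that $d(b + db') = db$ and the resulting $a$ is literally unchanged, not merely cohomologous. Otherwise the argument is complete.
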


\begin{proof}
    See \cite{madsen1997calculus}, Theorem 4.9.
\end{proof}

\subsubsection{The Proof}

The following proof is based on the one provided in \cite{lee2012smooth} (Chapter 17), and involves demonstrating the exactness of the de Rham complex for each $k$. Since the pullback maps and the exterior derivative commute (\cite{lee2012smooth}, Proposition 14.26), this establishes a short exact sequence of cochain maps, and the Mayer–Vietoris theorem can be readily derived from the Zigzag Lemma.

\begin{proof}

    Suppose $M$ is a smooth manifold with or without boundary, and $U, V$ are open subsets such that $M = U \cup V$. 
    Let the maps $i$ and $j$ be defined as in Theorem \ref*{section:tools}.1.
    It suffices to show that for each integer $k \geq 0$, the de Rham complex
    \[0 \rightarrow \Omega^k(M) \xrightarrow{i} \Omega^k(U) \oplus \Omega^k(V) \xrightarrow{j} \Omega^k(U \cap V) \rightarrow 0\] 
    is exact.

    \subsubsection{Part 1}
    \textbf{Exactness at $\Omega^k(M)$}

    \noindent It suffices to show that $i = i_u^* \oplus i_v^*$ is injective. 
    Suppose $\omega \in \Omega^k(M)$ such that
    \begin{align*}
        i(\omega) &= (i_u^* \oplus i_v^*)\omega \\
        &= (i_u^*(\omega), i_v^*(\omega)) \\
        &= (\omega_U, \omega_V) \\
        &= (0,0)
    \end{align*}
    Since the restrictions of $\omega$ to $U$ and $V$ are both zero and $\{U, V\}$ is an open cover of $M$, this implies that $\omega$ is zero.

    \subsubsection{Part 2}
    \textbf{Exactness at $\Omega^k(U) \oplus \Omega^k(V)$}

    \noindent It suffices to show that $\text{Ker}(j_u^* - j_v^*) = \text{Im}(i_u^* \oplus i_v^*)$.
    We have
    \begin{align*}
        (j_u^* - j_v^*) \circ (i_u^* \oplus i_v^*)(\omega) &= (j_u^* - j_v^*)(\omega _U, \omega_V) \\
        &= \omega_{U \cap V} - \omega_{U \cap V} \\
        &= 0
    \end{align*}
    Thus, $\text{Im}(i_u^* \oplus i_v^*) \subseteq \text{Ker}(j_u^* - j_v^*)$.
    Conversely, suppose $(j_u^* - j_v^*)(\omega, \eta) = 0$ for some $(\omega, \eta) \in \Omega^k(U) \oplus \Omega^k(V)$.
    We have
    \begin{align*}
        &(j_u^* - j_v^*)(\omega, \eta) = 0 \\
        &\implies j_u^* \omega = j_v^* \eta \\
        &\implies \omega_{U \cap V} = \eta_{U \cap V}
    \end{align*}
    This implies there is a $k$-form $\sigma$ on $M$ such that
    \begin{align*}
        \sigma = \begin{cases}
            \omega \quad \text{on U} \\
            \eta \quad \text{on V}
        \end{cases}
    \end{align*}
    Since $(i_u^* \oplus i_v^*)\sigma = (\sigma|_U, \sigma|_V) = (\omega, \eta)$, we have $\text{Ker}(j_u^* - j_v^*) \subseteq \text{Im}(i_u^* \oplus i_v^*)$.

    \subsubsection{Part 3}
    \textbf{Exactness at $\Omega^k(U \cap V)$}

    \noindent It suffices to show $i^* = j^*$ is surjective.
    Let $\sigma \in \Omega^k(U \cap V)$.
    We show there exist $\omega \in \Omega^k(U)$ and $\eta \in \Omega^k(V)$ such that 
    \begin{align*}
        \sigma &= (j_u^* - j_v^*)(\omega, \eta) \\
        &= j_u^* \omega - j_v^* \eta \\
        &= \omega|_{U \cap V} - \eta|_{U \cap V}
    \end{align*}

    Let $f, g$ be a smooth partition of unity subordinate to the open cover $\{U, V\}$ of $M$. 
    Define $\theta \in \Omega^k(U)$ by
    \[
    \theta =
    \begin{cases}
    g \omega& \text{on } U \cap V \\
    0 & \text{on } U \setminus \text{supp } g
    \end{cases}
    \]
    Both $g \omega$ and $0$ are zero on $\{ (U \cap V) \setminus \text{supp }g \}$, so $\theta$ is a smooth $k$-form on $U$. Similarly, define a smooth $k$-form on $V$ by
    \[
    \theta' =
    \begin{cases}
    -f \omega& \text{on } U \cap V \\
    0 & \text{on } V \setminus \text{supp } f
    \end{cases}
    \]
    Then, since $f + g = 1$, we have 
    \begin{align*}
        \theta|_{U \cap V} - \theta'|_{U \cap V} &= g \omega - (- f \omega) \\
        &= (f + g) \omega \\
        &= \omega
    \end{align*}
    
\end{proof}


Admittedly, in the interest of time and space, we do not include full proofs of the remaining theorems. 
However, we give an idea of what they entail and provide corresponding references.

\subsection{Homotopy Invariance}
It suffices to show that homotopies of continuous maps do not affect the induced map on the cohomology groups. This can be done by reducing the problem to two special maps which are the 0-section and the 1-section of a product line bundle. Then, by using a suitable cochain homotopy of these maps, one can prove that they induce the same map in cohomology.

\begin{proof} \textit{of Homotopy Invariance}

    See \cite{tu2011manifolds}, Chapter 29.
\end{proof}

\subsection{Poincaré Duality}
When dealing with a connected, oriented manifold $M$ of dimension $n$ which has a finite topology, it can be shown that the vector spaces $H_c^{n-k}(M)$ and $H^k(M)$ are both finite-dimensional. One can establish a natural bilinear pairing between these spaces, which in turn leads to a natural linear mapping of $H^k(M)$ to the vector space dual of $H_c^{n-k}(M)$.

\begin{proof} \textit{of Poincaré duality}

    See \cite{guillemin2019differential}, Section 5.4; or \cite{madsen1997calculus} Chapter 13; or \cite{bott1982differential} Chapter 5 (p.44)
\end{proof}

\subsection{The Künneth Formula}
The method proceeds by induction, assuming that the manifolds $M$ and $F$ have finite good covers.
Using natural projections $\pi$ and $\rho$ from $M \times F$ to $M$ and $F$ respectively, we obtain a map on forms that takes the tensor product of two forms $\omega \otimes \phi$ into $\pi^*\omega \wedge \rho^*\phi$.
This induces a map
\[
\psi : H^*(M) \otimes H^*(F) \rightarrow H^*(M \times F).
\]
and one can show this map is an isomorphism.

\begin{proof} \textit{of Künneth Formula}

    See \cite{bott1982differential}, Chapter 5 (p.47)
\end{proof}


\section{de Rham's Theorem}
\label{section:derhamtheorem}

Up to this point, we have mainly discussed some tools for computing de Rham cohomology. 
We've also claimed that de Rham cohomology identifies global properties of a manifold, however, we have yet to give a very good reason to believe this.
Our next topic, De Rham's theorem, establishes a natural isomorphism between the de Rham cohomology of a manifold $M$ and its singular cohomology with coefficients in $\mathbb{R}$. It exemplifies a fundamental connection between the analytical aspects of a manifold, which involve solutions to differential equations of the form $d\eta = \omega$ for closed forms $\omega$, and the topological aspects, which pertain to the characterization of ``holes'' in various dimensions.
The natural isomorphism, derived from Stokes' theorem, arises through the pairing of differential forms with smooth singular chains, resulting in a mapping from the k\textsuperscript{th} de Rham cohomology group to the k\textsuperscript{th} singular cohomology group with coefficients in $\mathbb{R}$.
This exposition is based on the proof outlined in \cite{lee2012smooth} Chapter 18, which itself is based on \cite{bredon2013topology}, Chapter 9. Another approach is via the theory of sheaves; for example, see \cite{warner1983foundations}.
To avoid confusion, going forward we denote de Rham cohomology $H_\Omega^k$ and singular cohomology $H^k$.

\subsection{Smooth Singular Homology}

We will establish a connection between the singular and de Rham cohomology groups by integrating differential forms over singular chains. 
Consider a scenario where we have a singular $k$-simplex $\sigma$ in a manifold $M$ and a $k$-form $\omega$ defined on $M$.
We wish to pull $\omega$ back by $\sigma$ and integrate the resulting form over $\sigma$.
However, there's an immediate issue with this approach.
Forms can only be pulled back by smooth maps, whereas singular simplices are generally continuous, but not necessarily smooth.
To address this issue, it can be shown that singular homology can be computed equally well using only smooth simplices.

If $M$ is a smooth manifold, a smooth $k$-simplex in $M$ is a smooth map $\sigma: \Delta_k \rightarrow M$.
The smooth chain group in degree $k$ is the subgroup generated by finite formal linear combinations of smooth simplices and is denoted $C_k^\infty(M)$.
Elements of this group are called smooth chains. 
Since the boundary of a smooth simplex is a smooth chain, we can define the $k$\textsuperscript{th} smooth singular homology group of $M$ to be the quotient group
\[
H_k^\infty(M) = \frac{\text{Ker}(\partial : C_k^\infty(M) \rightarrow C_{k-1}^\infty(M))}{\text{Im}(\partial : C_{k+1}^\infty(M) \rightarrow C_k^\infty(M))}.
\]
The inclusion $\iota : C^\infty_k(M) \hookrightarrow C_k(M)$ commutes with the boundary operator in homology $\iota \circ \partial = \partial \circ \iota$, thus it induces a map on homology $\iota_* : H^\infty_k(M) \rightarrow H_k(M)$ defined by $\iota_*[c] = [\iota(c)]$. Furthermore, this map is an isomorphism.

\begin{lemma}{\ref*{section:derhamtheorem}.1 Singular and Smooth Singular Homology (\cite{lee2012smooth}, Theorem 18.7)}
    For any smooth manifold $M$, the map $\iota_*: H^\infty_k(M) \rightarrow H_k(M)$ induced by inclusion is an isomorphism.
\end{lemma}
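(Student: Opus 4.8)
The plan is to exhibit the inclusion $\iota : C_\bullet^\infty(M) \hookrightarrow C_\bullet(M)$ as a chain-homotopy equivalence by constructing an explicit \emph{smoothing operator}, and then read off the isomorphism on homology. The one substantial analytic ingredient is the Whitney Approximation Theorem in its relative form: every continuous map $F : N \to M$ of smooth manifolds is homotopic to a smooth one, and if $F$ is already smooth on a closed subset $A \subseteq N$ the homotopy can be taken stationary on $A$ (so, in particular, homotopic smooth maps are smoothly homotopic). I will assume this, together with the standard machinery behind it (partitions of unity, tubular neighborhoods).

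\textbf{Step 1: construct $s$ and a chain homotopy $h$.} By induction on $k$ I would build a linear map $s : C_k(M) \to C_k^\infty(M)$ and a linear map $h : C_k(M) \to C_{k+1}(M)$ satisfying $\partial \circ s = s \circ \partial$ (so $s$ is a chain map) and $\partial \circ h + h \circ \partial = \iota \circ s - \mathrm{id}$ (so $h$ is a chain homotopy from the identity to $\iota s$). For $k = 0$ a singular simplex is a point, already smooth, so set $s\sigma = \sigma$ and $h\sigma = 0$. For the inductive step, take a singular $k$-simplex $\sigma : \Delta^k \to M$; its boundary faces are $(k-1)$-simplices, already smoothed by $s$ and connected to themselves by the homotopies recorded in $h$. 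These data assemble into a continuous map on the subcomplex $(\Delta^k \times \{0\}) \cup (\partial\Delta^k \times I) \subseteq \Delta^k \times I$: it equals $\sigma$ on the bottom face and the face-homotopies on the sides. That subcomplex is a retract of $\Delta^k \times I$, so the map extends to a continuous $\widetilde F : \Delta^k \times I \to M$; applying the relative Whitney theorem $\mathrm{rel}$ the sides $\partial\Delta^k \times I$ (where, inductively, everything is already smooth) replaces $\widetilde F$ by a homotopic smooth map whose top face is the desired $s\sigma$ and whose trace over $I$ is $h\sigma$. One checks the two identities hold, and — crucially — that the construction returns $\sigma$ itself (and $h\sigma = 0$) whenever $\sigma$ is already smooth, since then $\widetilde F$ may be taken to be the constant homotopy $\widetilde F(x,t) = \sigma(x)$, which is smooth to begin with.

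\textbf{Step 2: conclude.} From $\iota \circ s - \mathrm{id} = \partial h + h \partial$ on $C_\bullet(M)$ we get $\iota_* \circ s_* = \mathrm{id}$ on $H_k(M)$. On the subcomplex $C_\bullet^\infty(M)$ the construction gives $s \circ \iota = \mathrm{id}$ exactly, hence $s_* \circ \iota_* = \mathrm{id}$ on $H_k^\infty(M)$. Therefore $\iota_*$ is an isomorphism with inverse $s_*$.

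\textbf{Main obstacle.} The delicate part is Step 1: the smoothings must be made \emph{compatible across faces} so that $s$ is a genuine chain map, rather than smoothing each simplex in isolation. This forces the simultaneous induction on $s$ and $h$ and the careful use of the \emph{relative} Whitney theorem on $\Delta^k \times I$ rel its sides, along with the bookkeeping that keeps all the intermediate simplices and homotopies smooth so that ``rel the sides'' is even meaningful. Everything else — the base case, the algebraic identities, and the final homology argument — is routine once this scaffolding is set up. (One could instead avoid the explicit operator entirely: show $H_\bullet^\infty$ satisfies Mayer–Vietoris and homotopy invariance, verify $\iota_*$ is an isomorphism for a point and for convex open sets in $\mathbb{R}^n$, and bootstrap to all manifolds by an open-cover induction and the five lemma; this is the Bredon-style alternative.)
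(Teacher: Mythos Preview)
Your plan is exactly the construction the paper sketches (and defers to Lee for): build a smoothing chain map $s$ with $s\circ\iota=\mathrm{id}$ and a chain homotopy $h$ exhibiting $\iota\circ s \simeq \mathrm{id}$, by simultaneous induction on $k$ using relative Whitney approximation. So the approach is the same.

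There is, however, one step that would fail as written. You extend to $\widetilde F:\Delta^k\times I\to M$ and then ``replace $\widetilde F$ by a homotopic smooth map'' rel $\partial\Delta^k\times I$. If you smooth the \emph{entire} prism, the bottom face $\Delta^k\times\{0\}$ is no longer $\sigma$ (it cannot be, since $\sigma$ is merely continuous), and the prism identity gives $\partial(h\sigma)+h(\partial\sigma)=s\sigma-G(\cdot,0)$ rather than $s\sigma-\sigma$; the chain-homotopy equation $\partial h+h\partial=\iota s-\mathrm{id}$ then fails. The fix is not to smooth the whole prism but only its \emph{top} face: apply Whitney to $\widetilde F(\cdot,1):\Delta^k\to M$ rel $\partial\Delta^k$ (where it is already smooth by induction) to obtain the smooth simplex $s\sigma$ together with a homotopy rel $\partial\Delta^k$, and let $h\sigma$ be the concatenation of $\widetilde F$ with that homotopy. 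The prism $h\sigma$ is then only continuous, which is all that is required since $h$ lands in $C_{k+1}(M)$, and the bottom face is honestly $\sigma$. With this correction your argument goes through and matches the paper's outline.
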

The key to the proof is the construction of a homotopy from each continuous simplex to a smooth one that respects boundary faces.
To do so we construct two operators:
A smoothing operator $s : C_k(M) \rightarrow C^{\infty}_k(M)$ such that $s \circ \partial = \partial \circ s$ and $s \circ \iota$ is the identity on $C^{\infty}_k(M)$, and a homotopy operator $H_\sigma$ demonstrating that $\iota \circ s$ induces the identity map on $H_k(M)$.
The proof is rather long and technical and does not involve de Rham cohomology, so we omit it.
\begin{proof}
    See \cite{lee2012smooth}, Theorem 18.7.
\end{proof}
\noindent However, we will make use of this result: for the remainder of the paper, we take $[c] \in H_k(M) \cong H^\infty_k(M)$ to be smooth.

\subsection{The Tools}

Let $M$ be a smooth manifold, $\omega \in \Omega^k(M)$ be a closed $k$-form on $M$, and $\sigma : \Delta_k \rightarrow M$ be a smooth $k$-simplex in $M$.
We define the integral of $\omega$ over $\sigma$ as
\[
\int_{\sigma} \omega = \int_{\Delta_k} \sigma^*\omega,
\]
where $\Delta_k \subset \mathbb{R}^k$ is the domain of integration and $\sigma^*\omega$ is the pullback. 
If $c = \sum_{i=1}^l c_i\sigma_i \in C^\infty_k(M)$ is a smooth $k$-chain, we define the integral of $\omega$ over $c$ as
\[
\int_{c} \omega = \sum_{i=1}^l c_i \int_{\sigma_i} \omega
\]
A generalized version of Stokes' theorem for this definition is as follows.
\begin{lemma}{\ref*{section:derhamtheorem}.2 Stokes’ Theorem for Chains (\cite{lee2012smooth}, Proposition 18.12)}
    Let $M$ be a smooth manifold.
    If $c \in C^\infty_k(M)$ is a smooth $k$-chain and $\omega \in \Omega^{k-1}(M)$ is a smooth $(k-1)$-form, then
    \[
    \int_{\partial c} \omega = \int_c d\omega
    \]
\end{lemma}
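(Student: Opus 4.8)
The plan is to reduce to a single simplex and then invoke the classical Stokes' theorem on the standard simplex $\Delta_k$, viewed as a compact $k$-manifold with corners. First I would use that both sides are linear in $c$: the right-hand side $\int_c d\omega$ is linear in $c$ by the very definition of integration over a chain, and the left-hand side $\int_{\partial c}\omega$ is linear because $\partial$ is linear and integration over a chain is linear. Hence it suffices to prove the identity when $c = \sigma$ is a single smooth $k$-simplex $\sigma : \Delta_k \to M$, i.e. to show $\int_{\partial\sigma}\omega = \int_\sigma d\omega$.

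Next I would unwind the definitions. By definition $\int_\sigma d\omega = \int_{\Delta_k}\sigma^*(d\omega)$, and since pullback commutes with the exterior derivative (used earlier, \cite{lee2012smooth}, Proposition 14.26), $\sigma^*(d\omega) = d(\sigma^*\omega)$, where $\sigma^*\omega$ is a smooth $(k-1)$-form on a neighborhood of $\Delta_k$ in its affine span. So the goal becomes $\int_{\Delta_k} d(\sigma^*\omega) = \int_{\partial\sigma}\omega$. Now I apply Stokes' theorem for manifolds with corners to the form $\sigma^*\omega$ on $\Delta_k$, obtaining $\int_{\Delta_k} d(\sigma^*\omega) = \int_{\partial\Delta_k}\sigma^*\omega$, where $\partial\Delta_k$ carries the Stokes (induced-boundary) orientation. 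The boundary $\partial\Delta_k$ is, up to the lower-dimensional corner set — which has measure zero and so contributes nothing — the union of the $k+1$ facets obtained by deleting one vertex. Writing $F_{i,k}:\Delta_{k-1}\to\Delta_k$ for the $i$-th face map, I would check the standard orientation computation that the Stokes orientation of the $i$-th facet equals $(-1)^i$ times the orientation pushed forward along $F_{i,k}$. Combining these gives
\[
\int_{\partial\Delta_k}\sigma^*\omega \;=\; \sum_{i=0}^k (-1)^i \int_{\Delta_{k-1}} F_{i,k}^*(\sigma^*\omega) \;=\; \sum_{i=0}^k (-1)^i \int_{\Delta_{k-1}} (\sigma\circ F_{i,k})^*\omega \;=\; \sum_{i=0}^k (-1)^i \int_{\sigma\circ F_{i,k}}\omega \;=\; \int_{\partial\sigma}\omega,
\]
using the simplicial boundary formula $\partial\sigma = \sum_{i=0}^k (-1)^i\,\sigma\circ F_{i,k}$.

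The main obstacle is entirely in the two points just flagged. First, the version of Stokes' theorem I need is the one for domains with corners rather than smooth boundary, since $\Delta_k$ is not a smooth manifold with boundary; I would either cite this form directly or justify it by exhausting $\Delta_k$ from inside by smooth domains and noting that the corner set is Lebesgue-null. Second, and the genuinely fiddly bit, is the orientation/sign bookkeeping: one must verify that the outward-normal-first convention on $\Delta_k$ produces exactly the factor $(-1)^i$ on the $i$-th facet, which is precisely what makes the alternating sum defining $\partial\sigma$ emerge. Once that sign is pinned down, the remainder is routine substitution.
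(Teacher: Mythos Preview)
Your argument is correct and is in fact the standard proof. Note, however, that the paper does not actually prove this lemma: its entire proof reads ``See \cite{lee2012smooth}, Proposition 18.12.'' So there is nothing to compare your approach against within the paper itself. That said, the argument you outline --- reduce by linearity to a single simplex, pull back to $\Delta_k$, invoke Stokes' theorem for manifolds with corners, and match the induced-boundary orientation on the $i$-th facet with the sign $(-1)^i$ from the simplicial boundary formula --- is precisely the proof Lee gives at the cited reference. Your identification of the two genuine content steps (the corners version of Stokes and the orientation bookkeeping) is accurate; everything else is indeed routine substitution.
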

\begin{proof}
    See \cite{lee2012smooth}, Proposition 18.12.
\end{proof}

Stokes' theorem allows us to define a linear map 
$$\mathscr{I}: H^k_{\Omega}(M) \rightarrow H^k(M; \mathbb{R})$$ 
called the de Rham homomorphism. 
By the universal coefficient theorem, the map $H^k(M;\mathbb{R}) \rightarrow \text{Hom}(H_k(M),\mathbb{R})$ is an isomorphism.
Thus, for any $[\omega] \in H^k_{\Omega}(M)$ and $[c] \in H_k(M)$ we may define 
$$\mathscr{I}([\omega])[c] = \int_{\bar{c}} \omega$$
where $\bar{c}$ is any smooth $k$-cycle representing the homology class $[c]$ and $\mathscr{I}([\omega])$ is the cohomology class corresponding to the element of $\text{Hom}(H_k(M), R)$ defined by $[c] \mapsto \int_{\bar{c}} \omega$.

If $\bar{c}_1, \bar{c}_2$ are smooth cycles representing the same homology class, then the equivalence of singular homology and smooth singular homology (Lemma \ref*{section:derhamtheorem}.1) guarantees that $\bar{c}_1 - \bar{c}_2 = \partial p$ for some smooth (k+1)-chain $p \in C^\infty_{k+1}(M)$. Since $\omega$ represents a cohomology class we have $d\omega = 0$, thus
\[
\int_{\bar{c}_1} \omega - \int_{\bar{c}_2} \omega = \int_{\partial p} \omega = \int_{p} d\omega = 0,
\]
If $\omega = d \eta$ is exact, since $\bar{c}$ represents a homology class we have $\partial \bar{c} = 0$, so
\[
\int_{\bar{c}} \omega = \int_{\bar{c}} d\eta = \int_{\partial \bar{c}} \eta = 0
\]

The resulting homomorphism $\mathscr{I}([\omega]): H_k(M) \rightarrow \mathbb{R}$ depends linearly on $\omega$. Therefore, $\mathscr{I}([\omega])$ is a well-defined element of $\text{Hom}(H_k(M); \mathbb{R}) \cong H^k(M; \mathbb{R})$. Furthermore, this map is natural:
\begin{lemma}{\ref*{section:derhamtheorem}.3 Naturality of the de Rham Homomorphism (\cite{lee2012smooth}, Proposition 18.13)}
   Let $\mathscr{I} : H^k_{\Omega}(M) \rightarrow H^k(M; \mathbb{R})$ denote the de Rham homomorphism.
   \begin{enumerate}
       \item[a)] For a smooth map $F: M \rightarrow N$, the following diagram commutes:
    \[\begin{tikzcd}
    	{H_{\Omega}^k(N)} && {H_{\Omega}^k(M)} \\
    	\\
    	{H^k(N;\mathbb{R})} && {H^k(M;\mathbb{R})}
    	\arrow["{F^*}", from=1-1, to=1-3]
    	\arrow["{F^*}", from=3-1, to=3-3]
    	\arrow["{\mathscr{I}}", from=1-1, to=3-1]
    	\arrow["{\mathscr{I}}", from=1-3, to=3-3]
    \end{tikzcd}\]
       \item[b)] For open subsets $U, V$ of $M$ such that $M = U \cup V$, the following diagram commutes:
       \[\begin{tikzcd}
	{H_{\Omega}^{p-1}(U \cap V)} && {H_{\Omega}^k(M)} \\
	\\
	{H^{k-1}(U \cap V;\mathbb{R})} && {H^k(M;\mathbb{R})}
	\arrow["\delta", from=1-1, to=1-3]
	\arrow["{\partial^*}", from=3-1, to=3-3]
	\arrow["{\mathscr{I}}", from=1-1, to=3-1]
	\arrow["{\mathscr{I}}", from=1-3, to=3-3]
        \end{tikzcd}\]
       where $\delta$ and $\partial^*$ are the connecting homomorphisms defined by the Mayer–Vietoris sequences for de Rham and singular cohomology, respectively.
   \end{enumerate}
    
\end{lemma}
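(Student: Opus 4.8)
The plan is to prove the two commutativity statements by unwinding the definition of $\mathscr{I}$ on the level of forms and chains, and then invoking Stokes' theorem for chains (Lemma \ref*{section:derhamtheorem}.2) together with the explicit chain-level descriptions of the maps $F^*$, $\delta$, and $\partial^*$. Throughout I will use the identification $H^k(M;\mathbb{R}) \cong \operatorname{Hom}(H_k(M),\mathbb{R})$ supplied by the universal coefficient theorem, so that to check equality of two cohomology classes it suffices to check that they agree as functionals on every smooth homology class $[c] \in H_k(M) \cong H_k^\infty(M)$ — the latter identification being Lemma \ref*{section:derhamtheorem}.1.

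\textbf{Part (a).} First I would fix a closed $k$-form $\omega$ on $N$ and a smooth $k$-cycle $c$ in $M$. Going around the diagram one way gives the functional $[c] \mapsto \mathscr{I}(F^*[\omega])[c] = \int_c F^*\omega$; going the other way gives $[c] \mapsto (F^*\mathscr{I}[\omega])[c] = \mathscr{I}[\omega]\big(F_*[c]\big) = \int_{F_\# c} \omega$, where $F_\#$ is the induced map on smooth chains. So the whole statement reduces to the single identity
\[
\int_c F^*\omega = \int_{F_\# c} \omega,
\]
and by linearity it is enough to check this for a single smooth simplex $\sigma : \Delta_k \to M$, where it reads $\int_{\Delta_k}\sigma^*(F^*\omega) = \int_{\Delta_k}(F\circ\sigma)^*\omega$. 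This is immediate from functoriality of pullback, $\sigma^*\circ F^* = (F\circ\sigma)^*$, which is the defining property of the pullback recalled in Section \ref*{section:derham}. I should also remark that $F^*$ on singular cohomology is indeed induced by $F_\#$ on chains under the universal-coefficient identification, so that the two vertical-then-horizontal compositions are the ones I have written; this is a standard fact about singular cohomology that the paper assumes the reader knows.

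\textbf{Part (b).} This is the substantive half. I would start from the explicit description of the de Rham connecting homomorphism $\delta$ coming from the Mayer–Vietoris short exact sequence of Theorem \ref*{section:tools}.1: given a closed $(k-1)$-form $\tau$ on $U\cap V$, using a partition of unity $\{f,g\}$ subordinate to $\{U,V\}$ one writes $\tau = \theta|_{U\cap V} - \theta'|_{U\cap V}$ as in the proof in Section 4.1, and then $\delta[\tau]$ is represented by the global closed $k$-form $\omega$ on $M$ that equals $d\theta$ on $U$ and $d\theta'$ on $V$. On the singular side, $\partial^*$ is the simplicial connecting map: a smooth $k$-cycle $c$ in $M$ is, after barycentric subdivision (which is legitimate after passing to smooth chains via Lemma \ref*{section:derhamtheorem}.1), homologous to $c_U + c_V$ with $c_U$ a chain in $U$ and $c_V$ a chain in $V$, and $\partial^*[\partial c_U] = [c]$ — equivalently, testing the MV connecting map the other way, $\delta^{\mathrm{sing}*}\mathscr{I}[\tau]$ evaluated on $[c]$ equals $\mathscr{I}[\tau]$ evaluated on the class $[\partial c_U]\in H_{k-1}(U\cap V)$, i.e. $\int_{\partial c_U}\tau$. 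Then I chase: $\mathscr{I}(\delta[\tau])[c] = \int_c \omega = \int_{c_U}\omega + \int_{c_V}\omega = \int_{c_U} d\theta + \int_{c_V} d\theta'$ (using that $\omega$ restricts to $d\theta$ on $U$ and $d\theta'$ on $V$), which by Stokes for chains equals $\int_{\partial c_U}\theta + \int_{\partial c_V}\theta' = \int_{\partial c_U}\theta - \int_{\partial c_U}\theta'$ (since $\partial c_V = -\partial c_U$ as $\partial c = 0$), $= \int_{\partial c_U}(\theta - \theta') = \int_{\partial c_U}\tau$, which is exactly $\partial^*\mathscr{I}[\tau]$ evaluated on $[c]$.

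\textbf{The main obstacle} I anticipate is purely bookkeeping: matching up the sign and index conventions for the two connecting homomorphisms, and being honest about the passage $c \leadsto c_U + c_V$ — this requires knowing that the singular Mayer–Vietoris sequence can be computed with \emph{small} chains (chains subordinate to the cover $\{U,V\}$) and that this small-chain subcomplex computes the same homology, and moreover that after the smoothing of Lemma \ref*{section:derhamtheorem}.1 one may take all these decompositions to be smooth. Modulo quoting those standard facts about singular homology (subdivision / the small simplices theorem) and the explicit cochain-level formula for $\delta$ from the proof of Mayer–Vietoris, the computation above is the whole argument; I would present it as the two displayed chains of equalities, one per part, with the Stokes step (Lemma \ref*{section:derhamtheorem}.2) flagged as the only non-formal input in part (b). For a complete treatment I would refer the reader to \cite{lee2012smooth}, Proposition 18.13.
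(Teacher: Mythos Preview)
Your proposal is correct and follows essentially the same argument as the paper: part (a) is reduced to $\sigma^*\circ F^* = (F\circ\sigma)^*$ on a single simplex, and part (b) is the same Stokes computation---write the de Rham connecting class as $d\theta$ on $U$ and $d\theta'$ on $V$, split a representing cycle as $c_U + c_V$, apply Stokes for chains, and use $\partial c_V = -\partial c_U$ to collapse the result to $\int_{\partial c_U}\tau$. The only difference is cosmetic (variable names, and you are a bit more explicit than the paper about needing the small-chains/subdivision step to justify the splitting $c \sim c_U + c_V$, which the paper handles by citing the singular Mayer--Vietoris theorem).
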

\begin{proof}
    Consider (a); first, we wish show that $\mathscr{I} \circ F^* = F^* \circ \mathscr{I}$.
    Suppose $\sigma$ is a smooth $k$-simplex in $M$ and $\omega \in \Omega^k(N)$ is a smooth $k$-form on $N$.
    By definition,
    \[
    \int_{\sigma} F^* \omega =  \int_{\Delta_k} \sigma^* F^* \omega = \int_{\Delta_k} (F \circ \sigma)^* \omega = \int_{F \circ \sigma} \omega
    \]
    We can extend this linearly to show
    \[
    \mathscr{I}(F^*[\omega])[\sigma] = \mathscr{I}[\omega][F \circ \sigma] = \mathscr{I}[\omega] F^*([\sigma]) = F^*(\mathscr{I}[\omega])[\sigma]
    \]
    
    Now consider (b); we wish show that $\mathscr{I} \circ \delta = \partial^* \circ \mathscr{I}$.
    Since $H^k(M; \mathbb{R}) \cong \text{Hom}(H_k(M); \mathbb{R})$, we can rewrite this as
    $$\mathscr{I}(\delta[\omega])[e] = \mathscr{I}([\omega])(\partial_* [e])$$ 
    for any $[\omega] \in H^{k-1}_{\Omega}(U \cap V)$ and any $[e] \in H_k(M)$.
    This is equivalent to $$\int_e \sigma = \int_c \omega$$
    for a smooth $k$-form $\sigma$ representing $\delta[\omega]$ and a smooth $(k-1)$-chain $c$ representing $\partial_* [e]$.
    
    By the Mayer–Vietoris theorem for singular homology (\cite{lee2012smooth}, Theorem 18.4), we can let $c = \partial f$, where $f$ and $f'$ are smooth $p$-chains in $U$ and $V$, respectively, such that $f + f'$ represents the same homology class as $e$.
    \begin{center}
        \includegraphics[width=0.6\textwidth]{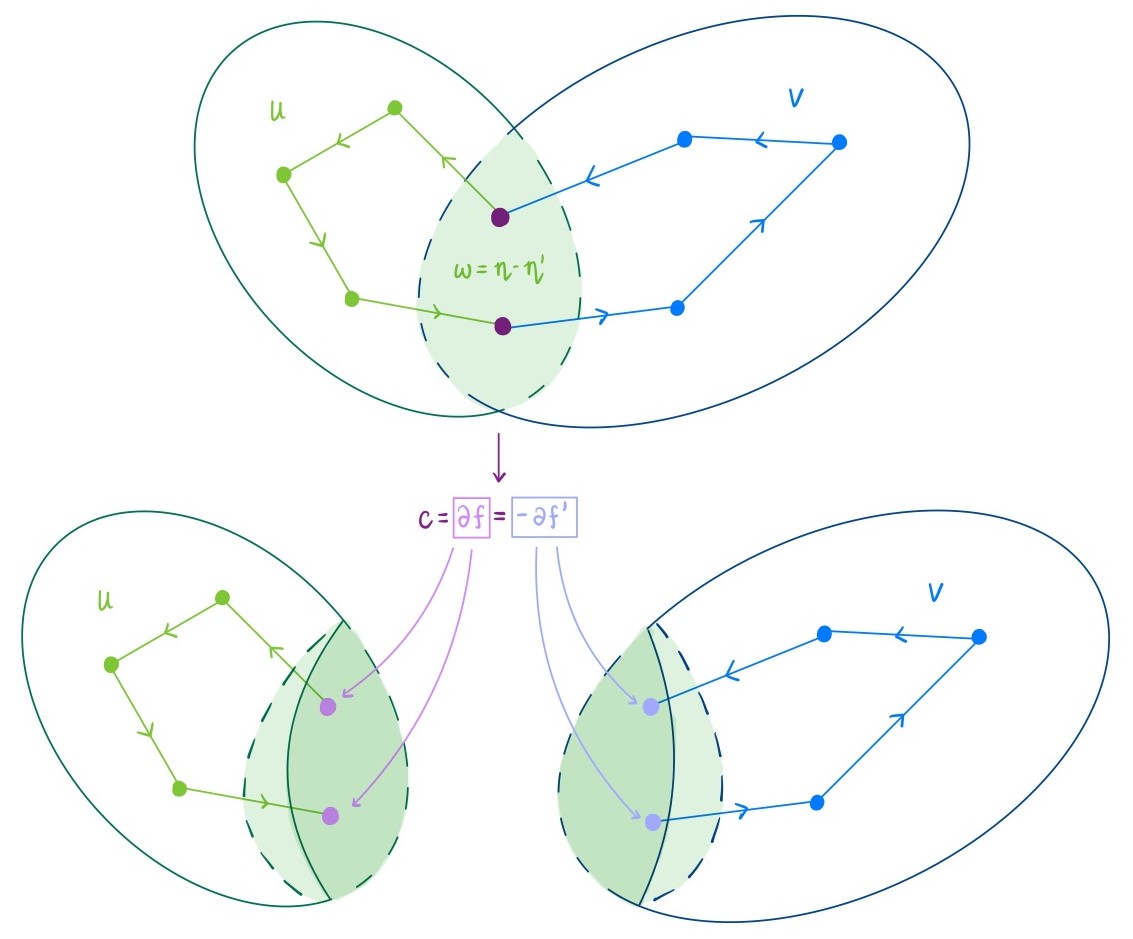}
    \end{center}
    Appealing to the well-definedness of the Mayer–Vietoris sequence, we can choose $\eta \in \Omega^{k-1}(U)$ and $\eta' \in \Omega^{k-1}(V)$ such that $\omega = \eta|_{U \cap V} - \eta'|_{U \cap V}$, and $\sigma$ is the $k$-form equal to $d\eta$ on $U$ and to $d\eta'$ on $V$. 
    
    Since $\partial f + \partial f' = \partial e = 0$, we have $\partial f = - \partial f'$.
    Similarly $d\eta|_{U \cap V} - d\eta'|_{U \cap V} = d\omega = 0$
    Therefore, we have
    \begin{align*}
        \int_c \omega &= \int_{\partial f} \omega \\
        &= \int_{\partial f} \eta - \int_{\partial f} \eta' \\
        &= \int_{\partial f} \eta + \int_{\partial f'} \eta' \qquad \text{since } \partial f = - \partial f'\\
        &= \int_{f} d \eta + \int_{f'} d \eta' \qquad \text{by Stokes’ Theorem for chains} \\
        &= \int_f \sigma + \int_{f'} \sigma \\
        &= \int_e \sigma
    \end{align*}
\end{proof}

\subsection{The Theorem}

We may now turn our attention to stating and proving the fundamental and beautiful result that is de Rham's Theorem.

\begin{theorem}{\ref*{section:derhamtheorem}.4: de Rham's Theorem \cite{lee2012smooth}}
    For every smooth manifold $M$ and nonnegative integer $p$, the de Rham homomorphism $$\mathscr{I} : H^k_{\Omega}(M) \rightarrow H^k(M; \mathbb{R})$$ is an isomorphism.
\end{theorem}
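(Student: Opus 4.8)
The plan is to prove de Rham's theorem by the method of \emph{Mayer--Vietoris induction} (a ``bootstrap''), combining the naturality of $\mathscr I$ (Lemma \ref*{section:derhamtheorem}.3), the Mayer--Vietoris theorem (Theorem \ref*{section:tools}.1), the Poincar\'e lemma (Corollary \ref*{section:tools}.2.2), and the five lemma. The overall shape is: (1) verify the theorem directly on contractible coordinate balls; (2) show that if $\mathscr I$ is an isomorphism for two open sets and for their intersection, then it is an isomorphism for their union; (3) upgrade the resulting finite-good-cover statement to an arbitrary smooth manifold.

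For step (1), suppose $U$ is diffeomorphic to a convex open subset of $\mathbb R^n$. By the Poincar\'e lemma we have $H^k_\Omega(U)=0$ for $k>0$ and $H^0_\Omega(U)=\mathbb R$, and by homotopy invariance of singular cohomology the same holds for $H^k(U;\mathbb R)$. In positive degrees $\mathscr I$ is a map between zero vector spaces, hence trivially an isomorphism; in degree $0$, the de Rham class of the constant function $1$ generates $H^0_\Omega(U)$, while $\mathscr I([1])$ is the homomorphism $H_0(U)\to\mathbb R$ sending the class of any point to $\int_{\mathrm{pt}}1=1$, which generates $\operatorname{Hom}(H_0(U),\mathbb R)\cong H^0(U;\mathbb R)$. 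So $\mathscr I$ is an isomorphism for every such $U$ (and trivially for the empty set).

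For step (2), let $M=U\cup V$ with $U,V$ open, and assume $\mathscr I$ is an isomorphism for each of $U$, $V$, $U\cap V$. The de Rham Mayer--Vietoris theorem supplies a long exact sequence in $H^\bullet_\Omega$, and applying $\operatorname{Hom}(\,\cdot\,,\mathbb R)$ to the singular Mayer--Vietoris homology sequence (\cite{lee2012smooth}, Theorem 18.4) -- which preserves exactness -- yields, via the isomorphism $H^\bullet(\,\cdot\,;\mathbb R)\cong\operatorname{Hom}(H_\bullet(\,\cdot\,),\mathbb R)$, the corresponding long exact sequence in singular cohomology. By Lemma \ref*{section:derhamtheorem}.3 -- part (a) applied to the four inclusion maps and part (b) to the connecting homomorphisms -- the de Rham homomorphisms assemble into a commuting ladder between these two long exact sequences. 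Since $\mathscr I$ is an isomorphism on the $U$-, $V$-, and $(U\cap V)$-terms, the five lemma forces it to be an isomorphism on the $M$-terms. Iterating, if $M$ has a \emph{finite good cover} $\{U_1,\dots,U_m\}$ -- open sets each diffeomorphic to a convex open subset of $\mathbb R^n$ and whose finite intersections are again of this type -- we induct on $m$: write $M=(U_1\cup\cdots\cup U_{m-1})\cup U_m$, note that $(U_1\cup\cdots\cup U_{m-1})\cap U_m=\bigcup_{i<m}(U_i\cap U_m)$ carries a good cover by at most $m-1$ sets, apply the induction hypothesis to the three pieces, and conclude by the case $m=2$.

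Step (3) -- removing the assumption of a finite good cover, which a general (e.g.\ noncompact) manifold need not possess -- is where I expect the real difficulty to lie. The tool is a bootstrap lemma (\cite{lee2012smooth}, Chapter 18): a property $P$ of open subsets of $M$ that holds for sets diffeomorphic to convex open subsets of $\mathbb R^n$, is inherited by $U\cup V$ whenever it holds for $U$, $V$, and $U\cap V$, and holds for arbitrary disjoint unions of sets satisfying it, must hold for $M$ itself. Steps (1) and (2) supply the first two hypotheses, and the disjoint-union hypothesis holds because a differential form on $\bigsqcup_\alpha M_\alpha$ is exactly a family of forms on the $M_\alpha$, so $H^k_\Omega(\bigsqcup_\alpha M_\alpha)=\prod_\alpha H^k_\Omega(M_\alpha)$, singular cohomology likewise turns disjoint unions into products, and $\mathscr I$ respects both identifications. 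The proof of the bootstrap lemma is a point-set argument that I would either carry out or cite from \cite{lee2012smooth}: choose a proper smooth exhaustion function $f$ on an open subset of $\mathbb R^n$, cover each compact band $f^{-1}([j,j+1])$ by finitely many small convex sets contained in $f^{-1}\big((j-1,j+2)\big)$, and sort all these sets by the residue of $j$ modulo a fixed integer large enough that sets coming from different bands of the same residue are disjoint; each residue class is then a disjoint union of finite unions of convex sets and hence satisfies $P$, and the boundedly many residue classes (together with their pairwise intersections, which have the same structure) are combined using step (2); a parallel argument over a countable atlas then passes from open subsets of $\mathbb R^n$ to an arbitrary manifold $M$. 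This bookkeeping is the main obstacle, and it uses nothing about cohomology beyond steps (1) and (2). (Bott and Tu's Čech--de Rham double-complex argument in \cite{bott1982differential} is an alternative route past this point.)
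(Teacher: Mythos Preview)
Your proposal is correct and follows essentially the same route as the paper: both verify $\mathscr I$ on convex open sets via the Poincar\'e lemma, propagate this through finite covers using the Mayer--Vietoris ladder, naturality (Lemma \ref*{section:derhamtheorem}.3), and the five lemma, handle disjoint unions componentwise, and then pass to the general case by an exhaustion-function argument that organizes the manifold into finitely many families of disjoint ``bands'' (the paper uses the odd/even split where you allow a general residue). The only cosmetic difference is that the paper packages the bootstrap as the notion of a \emph{de Rham basis} rather than an abstract ``property $P$'' lemma, but the content is identical.
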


\subsection{The Proof}

We begin with some definitions.
As usual, let $M$ denote a smooth manifold of dimension $n$. 
We call $M$ a de Rham manifold when, for all $k \geq 0$, the homomorphism $\mathscr{I}: H^k_{\Omega}(M) \rightarrow H^k(M; \mathbb{R})$ is an isomorphism.

For any smooth manifold $M$, we define an open cover $\{U_i\}$ of $M$ as a de Rham cover if each subset $U_i$ is itself a de Rham manifold, and if every finite intersection $U_{i_1} \, \cap U_{i_2} \, \cap \, \ldots \, \cap \, U_{i_k}$ is also de Rham. A de Rham cover that serves as a basis for the topology of $M$ is referred to as a de Rham basis for $M$.

\begin{lemma}{Lemma \ref*{section:derhamtheorem}.5: Disjoint Union is de Rham}
    If $\{M_j\}$ is any countable collection of de Rham manifolds, then their disjoint union is de Rham.
\end{lemma}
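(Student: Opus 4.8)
The plan is to exploit the fact that both cohomology theories turn a disjoint union into a direct product, and that the de Rham homomorphism is compatible with this decomposition. Write $M = \coprod_j M_j$, and for each $j$ let $\iota_j : M_j \hookrightarrow M$ be the inclusion of the $j$-th component; this is a smooth embedding onto a subset that is both open and closed. I would also invoke (as already done throughout Section \ref*{section:derhamtheorem}) the universal coefficient isomorphism $H^k(-;\mathbb{R}) \cong \mathrm{Hom}(H_k(-),\mathbb{R})$.

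First I would establish the two product decompositions. On the de Rham side, a $k$-form on $M$ is exactly an arbitrary choice of $k$-form on each component $M_j$ (there is no overlap condition, since the $M_j$ are disjoint open sets), so $\Omega^k(M) \cong \prod_j \Omega^k(M_j)$ as cochain complexes, and cohomology commutes with products of cochain complexes; hence $H^k_\Omega(M) \cong \prod_j H^k_\Omega(M_j)$, the isomorphism being $[\omega] \mapsto ([\iota_j^*\omega])_j$. On the singular side, every singular simplex has connected (indeed compact) image and therefore lands in a single $M_j$, so $C_k(M) \cong \bigoplus_j C_k(M_j)$ and thus $H_k(M) \cong \bigoplus_j H_k(M_j)$. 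Dualizing, and using $\mathrm{Hom}\bigl(\bigoplus_j A_j,\mathbb{R}\bigr) \cong \prod_j \mathrm{Hom}(A_j,\mathbb{R})$ together with universal coefficients, gives $H^k(M;\mathbb{R}) \cong \prod_j H^k(M_j;\mathbb{R})$, again realized by $[c] \mapsto (\iota_j^*[c])_j$.

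Next I would check that $\mathscr{I}$ intertwines these decompositions, i.e. that the square
\[\begin{tikzcd}
	{H^k_\Omega(M)} && {\prod_j H^k_\Omega(M_j)} \\
	\\
	{H^k(M;\mathbb{R})} && {\prod_j H^k(M_j;\mathbb{R})}
	\arrow["\cong", from=1-1, to=1-3]
	\arrow["{\prod_j \mathscr{I}}", from=1-3, to=3-3]
	\arrow["{\mathscr{I}}"', from=1-1, to=3-1]
	\arrow["\cong", from=3-1, to=3-3]
\end{tikzcd}\]
commutes. This is immediate from naturality of the de Rham homomorphism with respect to each smooth map $\iota_j$ (Lemma \ref*{section:derhamtheorem}.3(a)): the horizontal isomorphisms are assembled precisely out of the maps $\iota_j^*$, and Lemma \ref*{section:derhamtheorem}.3(a) says each $\iota_j^*$ commutes with $\mathscr{I}$. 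Concretely, for $[\omega] \in H^k_\Omega(M)$ and a smooth cycle $c$ supported in $M_j$, one has $\mathscr{I}([\omega])[c] = \int_c \omega = \int_c \iota_j^*\omega = \mathscr{I}([\iota_j^*\omega])[c]$, which is exactly the component-wise statement. Since each $M_j$ is de Rham by hypothesis, every $\mathscr{I} : H^k_\Omega(M_j) \to H^k(M_j;\mathbb{R})$ is an isomorphism, hence so is the product $\prod_j \mathscr{I}$, and by the commuting square so is $\mathscr{I}$ for $M$; thus $M$ is de Rham.

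The only delicate point — and hence the step I expect to require the most care — is the bookkeeping with infinite index sets: one must keep track that a \emph{direct sum} appears on singular homology (because singular chains are finitely supported), whereas a \emph{direct product} appears on both cohomology groups, and verify that the dualization/universal-coefficient step correctly converts the former into the latter. Once that is pinned down, the rest is a purely formal consequence of the naturality already proved in Lemma \ref*{section:derhamtheorem}.3(a).
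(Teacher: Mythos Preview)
Your proposal is correct and follows essentially the same approach as the paper: decompose both $H^k_\Omega$ and $H^k(-;\mathbb{R})$ of the disjoint union as direct products via the inclusions $\iota_j$, invoke naturality of $\mathscr{I}$ (Lemma \ref*{section:derhamtheorem}.3(a)) to make the square commute, and conclude from the hypothesis that each factor $\mathscr{I}$ is an isomorphism. The only difference is cosmetic: the paper cites an external reference for the singular-cohomology product decomposition, whereas you spell out the direct-sum-on-chains/dualize-to-product argument explicitly, which is if anything a small improvement in self-containedness.
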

\begin{proof}
    Let $M = \coprod_j M_j$, where $\{M_j\}$ be a countable collection of smooth $n$-manifolds with or without boundary.
    
    The inclusion maps $\iota_j : M_j \hookrightarrow M$ induce an isomorphism from $H^k(M; \mathbb{R})$ to $\prod_j H^k(M_j; \mathbb{R})$ (\cite{lee2012smooth}, Proposition 18.5).
    Similarly, the inclusion maps $\iota_j$ induce an isomorphism from $H^k_{\Omega}(M)$ to $\prod_j H^k_{\Omega}(M_j)$, for each $k \geq 0$.
    To see why, note the pullback maps $\iota_j^* : \Omega^k(M) \rightarrow \Omega^k(M_j)$ induce an isomorphism from $\Omega^k(M)$ to $\prod_j \Omega^k(M_j)$ defined by
    \[
    \omega : (\iota_1^* \omega, \iota_2^* \omega, \dots) = (\omega|_{M_1}, \omega|_{M_2}, \dots)
    \]
     The given map is injective because any smooth $k$-form that is zero when restricted to each $M_j$ must be zero itself. Additionally, it is surjective because specifying an arbitrary smooth $k$-form on each $M_j$ uniquely defines one on the entire manifold $M$.
    
    As a result, for both de Rham and singular cohomology, these inclusions induce isomorphisms between the cohomology groups of the disjoint union and the direct product of the cohomology groups of the manifolds $M_j$. Furthermore, by the naturality of the de Rham homomorphism (Lemma \ref*{section:derhamtheorem}.3), $\mathscr{I}$ commutes with these isomorphisms.
\end{proof}

\begin{lemma}{Lemma \ref*{section:derhamtheorem}.6}
    Every convex open subset of $\mathbb{R}^n$ is de Rham.
\end{lemma}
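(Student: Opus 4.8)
The plan is to use the single geometric fact about convex sets that matters here: a convex open set $U \subseteq \mathbb{R}^n$ is star-shaped about any of its points, so the straight-line homotopy $H(x,t) = (1-t)x + tx_0$ is a \emph{smooth} contraction of $U$ to a point, and in particular $U$ is path-connected and smoothly homotopy equivalent to a point. Everything else is a degree-by-degree bookkeeping argument using results already established. First I would record the two cohomology computations. By homotopy invariance of de Rham cohomology (Corollary \ref*{section:tools}.2.1, or directly the Poincaré Lemma, Corollary \ref*{section:tools}.2.2) we get $H^k_\Omega(U) \cong \mathbb{R}$ for $k=0$ and $H^k_\Omega(U) = 0$ for $k > 0$. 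By the analogous homotopy invariance for singular cohomology together with path-connectedness of $U$, we get $H^k(U;\mathbb{R}) \cong \mathbb{R}$ for $k = 0$ and $H^k(U;\mathbb{R}) = 0$ for $k > 0$. Hence in every positive degree $\mathscr{I}$ is a map between zero vector spaces, so it is automatically an isomorphism, and the only genuine content lies in degree $0$.

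For $k = 0$ I would argue directly, because we must check that $\mathscr{I}$ \emph{itself} is an isomorphism, not merely that source and target have the same dimension. Since $U$ is connected, $H^0_\Omega(U)$ is the one-dimensional space of constant functions, spanned by the constant function $1$. A generator of $H_0(U) \cong \mathbb{Z}$ is represented by a single point $p \in U$, regarded as a smooth $0$-simplex $\sigma : \Delta_0 \to U$, and by the definition of the integral over a chain we have $\int_\sigma 1 = \int_{\Delta_0} \sigma^*1 = 1$. Thus, under the identification $H^0(U;\mathbb{R}) \cong \mathrm{Hom}(H_0(U),\mathbb{R})$ used to define the de Rham homomorphism, $\mathscr{I}([1])$ is the functional sending the generator of $H_0(U)$ to $1$, which is a generator of $H^0(U;\mathbb{R})$. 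So $\mathscr{I}$ sends a basis of the line $H^0_\Omega(U)$ to a basis of the line $H^0(U;\mathbb{R})$ and is therefore an isomorphism.

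Combining the two cases, $\mathscr{I}: H^k_\Omega(U) \to H^k(U;\mathbb{R})$ is an isomorphism for every $k \geq 0$, which is exactly the assertion that $U$ is a de Rham manifold. I do not expect any real obstacle: convexity is used only to produce a smooth contraction, after which the argument runs on results already in hand (homotopy invariance in both theories, the universal-coefficient identification built into the definition of $\mathscr{I}$, and the triviality of integrating a $0$-form over a $0$-simplex). The only place a careful reader might pause is the remark that the positive-degree cases are \emph{not} automatic as a statement about abstract cohomology but become automatic precisely because the zero map between trivial groups is an isomorphism, while the degree-$0$ case genuinely requires the explicit evaluation above rather than a bare dimension count.
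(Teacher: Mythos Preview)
Your proof is correct and follows essentially the same approach as the paper: both split into the cases $k>0$ (where homotopy invariance/Poincar\'e lemma make source and target trivial, so $\mathscr{I}$ is vacuously an isomorphism) and $k=0$ (where one explicitly evaluates $\mathscr{I}$ on the constant function $1$ against a $0$-simplex to see it is nonzero, hence an isomorphism of one-dimensional spaces). Your write-up is slightly more explicit than the paper's about \emph{why} convexity enters---namely to produce the smooth straight-line contraction---and about the distinction between a dimension count and checking the specific map $\mathscr{I}$, but the substance is identical.
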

\begin{proof}
    Let $U$ be a convex open subset of $\mathbb{R}^n$. 

    \vskip 0.1in

    \noindent\textbf{Case 1: $k > 0$}

    \noindent Homotopy equivalent spaces have isomorphic singular cohomology groups. For any one-point space $\{q\}$, $H^k(\{q\}; \mathbb{R})$ is trivial for $k > 0$ (\cite{lee2012smooth}, Proposition 18.5).
    Since $U$ is homotopy equivalent to a one-point space, the singular cohomology groups of $U$ are also trivial for $k > 0$. 
    Similarly, by the Poincaré lemma, $H^k_{\Omega}(U)$ is trivial for $k > 0$.
    
    \vskip 0.1in

    \noindent\textbf{Case 2: $k = 0$}

    \noindent By the Poincaré lemma, $H^0_{\Omega}(U)$ is the 1-dimensional space consisting of the constant functions (which is generated by the constant function $c_1: M \rightarrow \mathbb{R}$ defined by $c_1(p) = 1$).
    Likewise, $H^0(U; \mathbb{R}) \cong \text{Hom}(H^0(U), \mathbb{R})$ is also 1-dimensional because $H^0(U)$ is generated by the dual of any singular 0-simplex $\sigma: \Delta_0 = \{0\} \rightarrow M$ (which is smooth because any map from a 0-manifold is smooth). Then,
    \begin{align*}
        \mathscr{I}[c_1][\sigma] &= \int_{\Delta_0} \sigma^* c_1 \\
        &= (c_1 \circ \sigma)(0) \\
        &= 1
    \end{align*}
    Thus, $\mathscr{I}: H^0_{\Omega}(U) \to H^0(U; \mathbb{R})$ is nontrivial, so it is an isomorphism.
\end{proof}

\noindent We are finally ready to prove the main theorem.

\begin{proof} \textit{(de Rham's Theorem)}

        \vskip 0.1in

        \noindent It suffices to show that every smooth manifold is de Rham.

        \subsubsection{Part 1}
        \textbf{If $M$ has a ﬁnite de Rham cover, then $M$ is de Rham.}

        \noindent Let $M = U_1 \cup \ldots \cup U_k$, where the open subsets $U_i$ and their finite intersections are de Rham. We proceed by induction on $k$. 

        \vskip 0.1in

        \noindent \textit{Base Case}:
        
        \noindent For $k = 1$, clearly, $M = U_1$ is de Rham.

        \vskip 0.1in

        \noindent\textit{Inductive Step}:
        
        \noindent Suppose that $M$ has a de Rham cover consisting of two sets $\{U, V\}$. By combining the Mayer–Vietoris sequences for de Rham and singular cohomology, we construct the following commutative diagram, in which the horizontal rows are exact sequences and the vertical maps correspond to de Rham homomorphisms:

        \vskip 0.2in

        \noindent\adjustbox{scale=0.8,center}{
        \begin{tikzcd}
        {H_{\Omega}^{k-1}(U) \oplus H_{\Omega}^{k-1}(V) } & {H_{\Omega}^{k-1}(U \cap V)} & {H_{\Omega}^{k}(M)} & {H_{\Omega}^{k}(U) \oplus H_{\Omega}^{k}(V)} & {H_{\Omega}^{k}(U \cap V)} \\
        	\\
        	{H^{k-1}(U; \mathbb{R}) \oplus H^{k-1}(V; \mathbb{R}) } & {H^{k-1}(U \cap V; \mathbb{R})} & {H^k(M; \mathbb{R})} & {H^{k}(U; \mathbb{R}) \oplus H^{k}(V; \mathbb{R})} & {H^{k}(U \cap V; \mathbb{R})}
        	\arrow["{{\mathscr{I} \oplus \mathscr{I}}}"', from=1-1, to=3-1]
        	\arrow["{{j^*}}", from=1-1, to=1-2]
        	\arrow["{{j^*}}", from=3-1, to=3-2]
        	\arrow["{{\mathscr{I}}}"', from=1-2, to=3-2]
        	\arrow["{{\partial^*}}", from=3-2, to=3-3]
        	\arrow["\delta", from=1-2, to=1-3]
        	\arrow["{{\mathscr{I}}}"', from=1-3, to=3-3]
        	\arrow["{{i^*}}", from=3-3, to=3-4]
        	\arrow["{{i^*}}", from=1-3, to=1-4]
        	\arrow["{{\mathscr{I} \oplus \mathscr{I}}}"', from=1-4, to=3-4]
        	\arrow["{{j^*}}", from=1-4, to=1-5]
        	\arrow["{{j^*}}", from=3-4, to=3-5]
        	\arrow["{{\mathscr{I}}}"', from=1-5, to=3-5]
        \end{tikzcd}          
        }

        \vskip 0.2in

        Due to the naturality of the de Rham Homomorphism, the diagram commutes.
        Applying the inductive hypothesis, we know the first, second, fourth, and fifth vertical maps are all isomorphisms. Consequently, by the Five Lemma, we can conclude that the middle map is also an isomorphism. Thus, $M$ is a de Rham manifold.

        \vskip 0.1in

        Now, assume the claim holds for smooth manifolds with a de Rham cover of $k \geq 2$ open sets. Consider a de Rham cover ${U_1, \ldots, U_{k+1}}$ of $M$. Define $U$ as the union of the first $k$ sets, i.e., $U = U_1 \cup \ldots \cup U_k$, and set $V = U_{k+1}$.

        \vskip 0.1in

        By the inductive hypothesis, $U$ and $V$ are de Rham.
        Since $U \cap V$ admits a $k$-fold de Rham cover: ${U_1 \cap U_{k+1}, \ldots, U_k \cap U_{k+1}}$, it is also de Rham. Therefore, $M = U \cup V$ is a de Rham manifold.

        \subsubsection{Part 2}
        \textbf{If $M$ has a de Rham basis, then $M$ is de Rham.}

        \noindent Suppose $\{U_\alpha\}$ is a de Rham basis for $M$. 
        Recall that for any topological space $M$, an exhaustion function is a continuous function $f: M \rightarrow \mathbb{R}$ such that for each $c \in \mathbb{R}$, the set $f^{-1}((-\infty, c])$ is compact. Every smooth manifold, with or without boundary, admits a smooth, positive exhaustion function (\cite{lee2012smooth}, Proposition 2.28).

        \vskip 0.1in
        
        Now, let $f: M \to \mathbb{R}$ be such an exhaustion function.
        For each $m \in \mathbb{Z}_{\geq 0}$, define subsets $A_m$ and $A'_{m}$ of $M$ by
        \[
        A_m = f^{-1}\left( \left [m, m + 1 \right] \right) = \left\{ q \in M \mid m \leq f(q) \leq m + 1 \right\}
        \]
        and
        \[
        A'_{m} = f^{-1}\left( \left[ m - \frac{1}{2}, m + \frac{3}{2} \right] \right) = \left\{ q \in M \mid m - \frac{1}{2} < f(q) < m + \frac{3}{2} \right\}
        \]
        \begin{center}
            \includegraphics[width=0.5\textwidth]{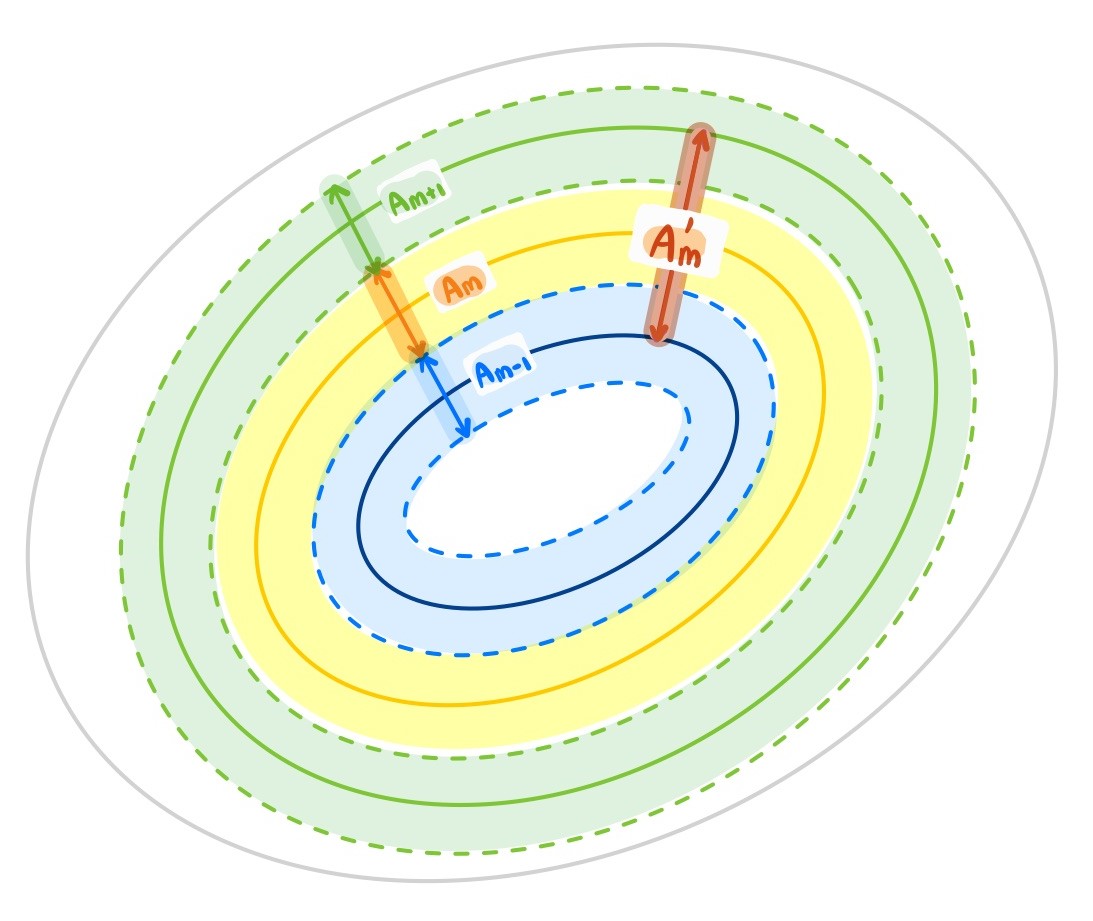}
        \end{center}
        We have that $$M = \bigcup_{m \geq 0} A_{m}$$
        Recall $\{U_\alpha\}$ is a de Rham basis, so $A'_{m}$ can be written as a union of basis elements.
        Since $A_{m} \subseteq A'_{m}$, for each point $p$ in $A_{m}$ there is an open set $U_i \in \{U_\alpha\}$ with $p \in U_i$ and $U_i \subseteq A'_{m}$.
        Thus, $A_{m}$ can be written as a union of basis elements.

        \vskip 0.1in
        
        The set of all such basis elements is an open cover of $A_m$.
        Furthermore, since $f$ is an exhaustion function, $A_m$ is compact.
        Therefore, this open cover is finite.
        Let $\beta_m$ be the union of this finite collection of sets. This is itself a finite de Rham cover of $\beta_m$, so by Part 1, $\beta_m$ is de Rham.  

        \vskip 0.1in

        Since $\beta_m \subseteq A'_{m}$, the nonempty intersection $\beta_m \cap \beta_{m'} \neq \emptyset$ implies $m' \in \{ m - 1, m, m+1 \}$.
        Therefore, 
        $$U = \bigcup_{m \text{ odd}} B_m$$
        and 
        $$V = \bigcup_{m \text{ even}} B_m$$ 
        are disjoint unions of de Rham manifolds.
        Recall if $\{Mj\}$ is any countable collection of de Rham manifolds, then their disjoint union is de Rham (Lemma \ref*{section:derhamtheorem}.5).
        Therefore, $U$ and $V$ are both de Rham.

        \vskip 0.1in

        Lastly, consider $U \cap V = \bigcup_{m \in \mathbb{Z}} B_m \cap B_{m+1}$, each of which has a finite de Rham cover consisting of sets of the form $U_\alpha \cap U_\beta$.
        Since it is a disjoint union of such sets, $U \cap V$ is de Rham.
        Therefore, $M = U \cup V$ is de Rham by Part 1.

        \subsubsection{Part 3}
        \textbf{Every open subset of $\mathbb{R}^n$ is de Rham.}

        \noindent Suppose $U \subseteq \mathbb{R}^n$ is an open subset.
        By definition, $U$ has a basis comprised of open Euclidean balls $B_r(p) = \{x \in \mathbb{R}^n \mid d(x, p) < r\}$ 
        Since each ball is convex, it is a de Rham manifold, and as any finite intersection of balls remains convex, the finite intersections are de Rham. Consequently, $U$ has a de Rham basis and is therefore de Rham by Part 2.

        \subsubsection{Part 4}
        \textbf{Every smooth manifold is de Rham. }

        \noindent Every smooth manifold can be equipped with a basis of atlas charts, each of which is diffeomorphic to an open subset of $\mathbb{R}^n$, as are their finite intersections.
        Given the naturality of the de Rham Homomorphism, it follows that $\mathscr{I}$ commutes with the cohomology maps induced by smooth maps. Consequently, any manifold that is diffeomorphic to a de Rham manifold is also de Rham.
        Hence, the basis of atlas charts forms a de Rham basis. 

        \vskip 0.1in

        Consequently, every smooth manifold has a de Rham basis and, by Part 2, is therefore de Rham.
\end{proof}




\newpage
\bibliography{refs}

\vskip 0.2in

\noindent Mathematical Institute, University of Oxford, OX2 6GG, UK \\
\noindent \textit{Email Address}: \url{alice.petrov@maths.ox.ac.uk}


\end{document}